\newtheorem{conjecture}{Conjecture}
\newtheorem{theorem}{Theorem}
\newtheorem{lemma}[theorem]{Lemma}
\newtheorem{proposition}[theorem]{Proposition}
\newtheorem{corollary}[theorem]{Corollary}
\newcommand\jj{\vec j}
\newcommand\ff{\tilde f}
\newcommand\JJ{{\mathbb J}}
\newcommand\NN{{\mathbb N}}
\newcommand\RR{{\mathbb R}}
\newcommand{\dd}{\;\mathrm{d}}
\DeclareMathOperator{\Real}{Re}
\DeclareMathOperator{\Tr}{Tr}
\newcommand\unit{\iota}
\newcommand\spectrum{{Spectrum}}
\begin{document}
\title{Cycles of length three and four in tournaments\thanks{This work was supported by the European Research Council (ERC) under the European Union's Horizon 2020 research and innovation programme (grant agreement No 648509) and by the Engineering and Physical Sciences Research Council Standard Grant number EP/M025365/1. This publication reflects only its authors' view; the European Research Council Executive Agency is not responsible for any use that may be made of the information it contains.}}

\author{Timothy F.~N. Chan\thanks{School of Mathematical Sciences, Monash University, Melbourne 3800, Australia, and Mathematics Institute and DIMAP, University of Warwick, Coventry CV4 7AL, UK. Email: {\tt timothy.chan@monash.edu}.}\and
        Andrzej Grzesik\thanks{Faculty of Mathematics and Computer Science, Jagiellonian University, {\L}ojasiewicza 6, 30-348 Krak\'{o}w, Poland. E-mail: {\tt Andrzej.Grzesik@uj.edu.pl}.}\and
        Daniel Kr{\'a}l'\thanks{Faculty of Informatics, Masaryk University, Botanick\'a 68A, 602 00 Brno, Czech Republic, and Mathematics Institute, DIMAP and Department of Computer Science, University of Warwick, Coventry CV4 7AL, UK. E-mail: {\tt dkral@fi.muni.cz}.}\and
	Jonathan A. Noel\thanks{Mathematics Institute and DIMAP, University of Warwick, CV4 7AL Coventry, UK. Research of the fourth author is supported by the Leverhulme Trust Early Career Fellowship ECF-2018-534. E-mail: {\tt J.Noel@warwick.ac.uk}.}}
	
\date{}
\maketitle
\begin{abstract}
Linial and Morgenstern conjectured that, among all $n$-vertex tournaments with $d\binom{n}{3}$ cycles of length three,
the number of cycles of length four
is asymptotically minimized by a random blow-up of a transitive tournament
with all but one part of equal size and one smaller part.
We prove the conjecture for $d\ge 1/36$ by analyzing the possible spectrum of adjacency matrices of tournaments.
We also demonstrate that the family of extremal examples is broader than expected and
give its full description for $d\ge 1/16$.
\end{abstract}

\section{Introduction}
\label{sec-intro}

One of the oldest theorems in extremal graph theory is Mantel's theorem~\cite{Man07},
which asserts that every $n$-vertex graph with more than $n^2/4$ edges contains a triangle.
The Erd\H os--Rademacher Problem, which can be traced back to the work of Rademacher in the 1940's and
the later work of Erd\H os~\cite{Erd55}, asks for the minimum possible number of triangles in a 
graph with a given number of vertices and edges. It was conjectured that this minimum is asymptotically 
attained by a complete multipartite graph (i.e. a blow-up of a constant-sized clique)
with all but one part of equal size and one smaller part. 
This conjecture attracted substantial attention for several decades,
see e.g.~\cite{Bol76,Fis89,Goo59,LovS83},
until its solution by Razborov~\cite{Raz08} in 2008 using his newly developed flag algebra method.
Pikhurko and Razborov~\cite{PikR17} described the asymptotic structure of all
extremal graphs for this problem and an exact description was obtained by Liu, Pikhurko and Staden~\cite{LiuPS17}.
The more general problem of determining the minimum asymptotic density of $k$-cliques in graphs with given edge-density 
(the Erd\H os--Rademacher Problem corresponds to the case $k=3$) has also been solved
by Nikiforov~\cite{Nik11} (the case $k=4$) and by Reiher~\cite{Rei16} in full generality.

In this paper, we investigate a related problem for tournaments posed by Linial and Morgenstern~\cite{LinM16}, who asked for the minimum density of 4-cycles in a large tournament with fixed density of 3-cycles.
They conjectured that the tournament asymptotically minimizing this density is a blow-up of a transitive tournament with
all but one part of equal size and one smaller part in which the arcs within each part are oriented randomly 
(they call this construction a \emph{random blow-up}), i.e., the structure of the conjectured 
extremal examples is akin to those of the Erd\H os--Rademacher problem.

We confirm this conjecture in the case where the proposed extremal examples have two or three parts and provide a full description of extremal tournaments in the two-part case.
In contrast to many of the recent proofs in this area that use the
flag algebra method, our approach is based on the 
analysis of the spectrum of adjacency matrices of tournaments.

We now state the problem that we study in the paper more formally.
The \emph{density} of the directed cycle $C_{\ell}$ of length $\ell$ in a tournament $T$, denoted by $t(C_{\ell},T)$,
is the probability that a random mapping from $V(C_{\ell})$ to $V(T)$ is a homomorphism 
(i.e. arcs of $C_\ell$ map to arcs of $T$).
Note that, for fixed $\ell$, a tournament $T$ on $n$ vertices contains $t(C_\ell,T)n^\ell/\ell+O(n^{\ell-1})$ 
cycles of length $\ell$. In fact, for $\ell \in \{3,4,5\}$, the error term is zero as every homomorphism of 
$C_\ell$ to $T$ is injective. A standard application of the Cauchy--Schwarz inequality shows that 
$t(C_3,T)\le 1/8$ for every tournament $T$ (see~\cite[Fact 1]{ChuG91} for details). 
Our focus is on bounding the minimum possible value of $t(C_4,T)$ asymptotically as a function of $t(C_3,T)$. 

\begin{figure}
\begin{center}
\epsfbox{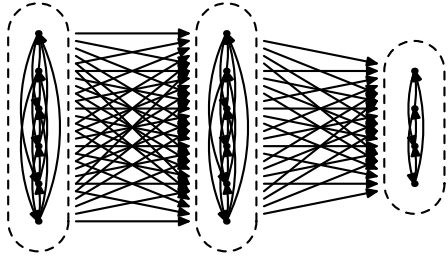}
\end{center}
\caption{An illustration of the random blow up construction for $z=3/8$.}
\label{fig-z}
\end{figure}

Next, we describe the family of conjectured tight examples from~\cite{LinM16} which will motivate the definition of the function $g$ below. 
Given $z\in [0,1]$, we let $n$ be an integer chosen large with respect to $z$ and define an $n$-vertex tournament $T$ as follows. 
If $z=0$, then let $T$ be a transitive tournament. Otherwise, the vertices of $T$ are split into $\lfloor z^{-1}\rfloor+1$ disjoint parts 
$V_1,\ldots,V_{\lfloor z^{-1}\rfloor+1}$ such
that $\lfloor z^{-1}\rfloor$ parts contain exactly $\lfloor zn\rfloor$ vertices and
the remaining part contains the rest of the vertices (note that if $z^{-1}$ and $zn$ are integers, then the last part is empty).
If two vertices $v$ and $v'$ respectively belong to distinct parts $V_i$ and $V_j$ with $i<j$,
then the tournament $T$ contains an edge from $v$ to $v'$.
If $v$ and $v'$ instead belong to the \textit{same} part, then the edge between them
is oriented from $v$ to $v'$ with probability $1/2$,
i.e., the vertices of each part induce a randomly oriented tournament. See Figure~\ref{fig-z} for 
an illustration. It is easy to see that $t(C_3,T)=t(C_4,T)=0$ if $z=0$ and, if $z\in (0,1]$, then, with 
high probability, it holds that
\[t(C_3,T) = \frac{1}{8}\left(\lfloor z^{-1}\rfloor z^3+\left(1-\lfloor z^{-1}\rfloor z\right)^3\right) + o(1)\]
and
\[t(C_4,T) = \frac{1}{16}\left(\lfloor z^{-1}\rfloor z^4+\left(1-\lfloor z^{-1}\rfloor z\right)^4\right) + o(1)\]
because of the concentration around the expected values.

The conjecture of Linial and Morgenstern~\cite{LinM16} asserts that the above construction is asymptotically optimal. In light of this, we write \textit{the regime of $k$ parts} to denote the set of values of $t(C_3,T)$ between $1/(8k^2)$ and $1/(8(k-1)^2)$, corresponding to the range of values for which the above construction has its vertices split into $k$ parts. In particular, the focus of this paper is on the regimes of two and three parts, which refer to values of $t(C_3,T)$ in the ranges $[1/32,1/8]$ and $[1/72,1/32]$, respectively.

To formally state the conjecture, define $g:[0,1/8]\to[0,1]$ as follows: $g(0)=0$ and 
\[g\left(\frac{1}{8}\left(\lfloor z^{-1}\rfloor z^3+\left(1-\lfloor z^{-1}\rfloor z\right)^3\right)\right)=
  \frac{1}{16}\left(\lfloor z^{-1}\rfloor z^4+\left(1-\lfloor z^{-1}\rfloor z\right)^4\right)\]
for $z\in (0,1]$.
\begin{conjecture}[{Linial and Morgenstern~\cite[Conjecture 2.2]{LinM16}}]
\label{conj-main}
It holds that \[t\left(C_4,T\right)\ge g\left(t(C_3,T)\right)+o(1)\] for every tournament $T$.\footnote{Linial and Morgenstern phrased 
	their conjecture equivalently in terms of subgraph counts as opposed to homomorphism densities; 
	the quantities $c_3$ and $c_4$ in their paper are asymptotically equal to $2t(C_3,T)$ 
	and $6t(C_4,T)$, respectively.}
\end{conjecture}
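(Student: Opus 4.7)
The plan is to recast the conjecture as a moment optimization over the spectrum of the tournament adjacency matrix, and then to solve the resulting finite-dimensional problem via Lagrange multipliers.

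Let $A\in\{0,1\}^{n\times n}$ be the adjacency matrix of an $n$-vertex tournament $T$ and let $\lambda_1,\dots,\lambda_n\in\mathbb{C}$ be its eigenvalues with algebraic multiplicity. The tournament structure gives two strong spectral identities: $A$ has zero diagonal, so $\sum_i \lambda_i = \Tr A = 0$, and $A_{ij}A_{ji}=0$ whenever $i\neq j$, so $\sum_i \lambda_i^2 = \Tr(A^2) = 0$. Since every closed directed walk of length $3$ or $4$ in $T$ is a directed cycle, the cycle densities are captured by trace powers:
\[
n^3\, t(C_3,T) = \sum_i \lambda_i^3 \qquad \text{and} \qquad n^4\, t(C_4,T) = \sum_i \lambda_i^4.
\]
Schur's inequality together with $\|A\|_F^2 = n(n-1)/2$ further gives $\sum_i |\lambda_i|^2 \leq n(n-1)/2$. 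Normalizing by $\mu_i := \lambda_i/n$, the conjecture reduces to the following extremal moment problem: among all $n$-tuples $(\mu_i)_{i=1}^n\subset\mathbb{C}$ satisfying $\sum \mu_i = \sum \mu_i^2 = 0$, $\sum |\mu_i|^2 \leq \tfrac{1}{2} + o(1)$ and $\sum \mu_i^3 = d$, show that $\sum \mu_i^4 \geq g(d) - o(1)$ whenever $d\ge 1/36$. This is now a statement purely about finite families of complex numbers.

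To attack this, I would write $\mu_i = x_i + \mathrm{i} y_i$. The first two moment conditions split into $\sum x_i = 0$, $\sum x_i y_i = 0$, $\sum x_i^2 = \sum y_i^2$; the third-moment condition becomes $\sum x_i(x_i^2 - 3 y_i^2) = d$ together with $\sum y_i(3 x_i^2 - y_i^2) = 0$; and the objective $\sum\mu_i^4$ rewrites as $\sum (x_i^2 + y_i^2)^2 - 8 \sum x_i^2 y_i^2$. Applying Lagrange multipliers to this real, polynomially constrained minimization forces every pair $(x_i, y_i)$ at a minimizer to satisfy a common system of two cubic equations in two real unknowns, which has only finitely many solutions. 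Consequently the extremal normalized spectrum is supported on boundedly many points in $\mathbb{C}$, and the problem becomes finite-dimensional: enumerate the support configurations and compute the minimum in each case. I would expect the minimum to be attained by configurations matching the spectrum of the random blow-up from the statement---a small number of real ``macroscopic'' eigenvalues arising from the deterministic backbone, plus a concentrated imaginary part coming from the random within-part orientations---so that substitution yields exactly $g(d)$.

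The main obstacle is the inherent looseness of the spectral relaxation: not every tuple $(\mu_i)$ satisfying the moment identities above actually arises as the spectrum of a tournament. The restriction $d\ge 1/36$, rather than the conjecturally optimal $d\ge 0$, very likely pinpoints precisely where spurious, non-realizable configurations begin to beat the blow-up spectra. Ruling them out in the range $d\ge 1/36$ will require pushing the Lagrangian analysis further---exploiting the reality of $A$ (so non-real $\mu_i$ appear in conjugate pairs) and the Schur bound $\sum |\mu_i|^2 \le 1/2$ (which is saturated exactly at the blow-up spectra)---and then performing a careful case analysis on the multiplicities of the extremal roots and on which sign patterns of the Lagrange multipliers are consistent with such a tournament realization.
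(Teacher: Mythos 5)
Your overall direction (pass to the spectrum, express $t(C_3)$ and $t(C_4)$ as normalized traces, and run a Lagrange-multiplier analysis on the resulting moment problem) is indeed the spirit of the paper's argument for $\sigma_3\ge 1/72$, but the specific relaxation you reduce to is provably too weak, so the plan as written cannot close. Your feasible set is: conjugate-closed tuples with $\sum\mu_i=\sum\mu_i^2=0$, $\sum|\mu_i|^2\le 1/2+o(1)$, $\sum\mu_i^3=d$. Take $k\approx 1/(64d^2)$ copies of $\rho\approx 4d$, take $m\gg k$ copies of $-k\rho/m$, and take $p$ conjugate pairs $\pm\unit b$ with $2pb^2=k\rho^2(1+k/m)$ and $b\to 0$. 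Then the first and second moments vanish exactly, the third moment can be set to $d$ by a tiny adjustment of $\rho$, and $\sum|\mu_i|^2=2k\rho^2(1+k/m)\le 1/2+o(1)$, yet $\sum\mu_i^4\to k\rho^4\approx 4d^2$. Since $4d^2<g(d)$ throughout $(1/72,1/8)$ (e.g. $1/1296<1/432=g(1/72)$ and $1/256<1/128=g(1/32)$), the minimum of your moment problem lies strictly below the conjectured bound on the whole range you target, not just below some threshold; the constraints you do impose (zero second moment and the Schur bound, which my example essentially saturates, and conjugate symmetry) do not rescue it.

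The constraints that make a spectral relaxation work are precisely the ones you omitted, and they are where the tournament structure enters. The paper works with the matrix having diagonal $1/2$, so that $A+A^T=\JJ$, and proves (Lemma~\ref{lm-positive}, via $0\le\overline{(\jj^Tv)}(\jj^Tv)=(\lambda+\overline\lambda)v^*v$) that every eigenvalue has non-negative real part; in your $0/1$ normalization this says $\Real\mu_i\ge -1/(2n)$, which caps the total negative real mass by $1/2$ and is exactly what excludes the configuration above (there the negative mass is $k\rho=1/(16d)>1/2$ for $d<1/8$). Second, by Perron--Frobenius the spectral radius is itself a real eigenvalue, and Lemma~\ref{lm-cubes} bounds it below by the blow-up parameter $z\ge 1/6$ once $\sigma_3\ge 1/72$. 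Only with these ingredients does the paper formulate the problem \spectrum{} (real parts non-negative, bounded by $\rho$, summing to $1/2$; third moment fixed) and run the Lagrange-multiplier classification, finishing with Lemma~\ref{lm-linial}; and even this stronger relaxation provably fails below $1/72$ (the four-part regime), so one should not expect your weaker one to become tight at your threshold. Two smaller points: in the homomorphism-density normalization you use, the paper's ``$d\ge 1/36$'' means $t(C_3,T)\ge 1/72$; and in the two-part regime the paper uses a different argument (the canonical form of the skew-symmetric matrix $\JJ-2A$ plus Cauchy--Schwarz, Theorem~\ref{thm-reg2}) which in addition yields the characterization of extremal tournaments, something a pure eigenvalue-moment relaxation cannot give.
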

Although $0\leq t(C_3,T)\leq 1/8$ for every tournament $T$, the conjecture is currently only known to hold for tournaments with 3-cycle density asymptotically equal to $0$, $1/8$, or $1/32$~\cite{LinM16}.

\begin{figure}[t]
\begin{center}
\epsfbox{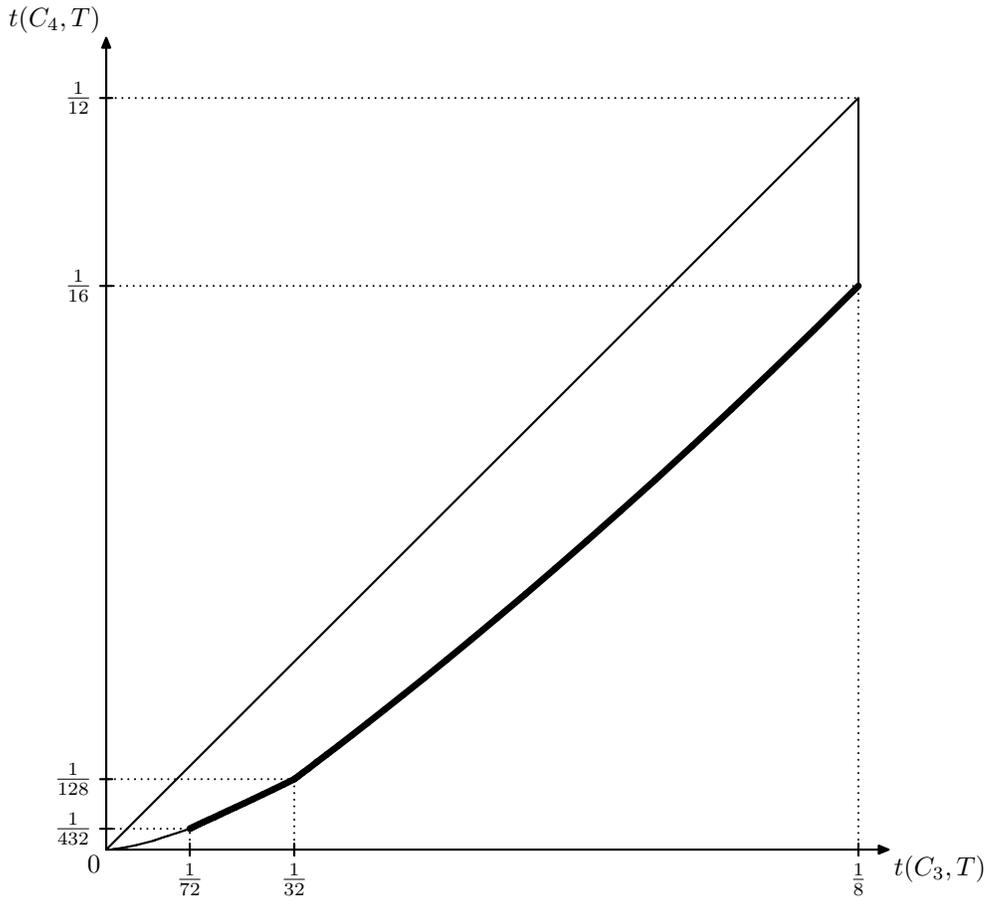}
\end{center}
\caption{The conjectured region of asymptotically feasible densities of $C_3$ and $C_4$ in tournaments.
        The lower bound for $t(C_3,T)\in\{1/8,1/32\}$ and the upper bound were proved in~\cite{LinM16}.
         The rest of the lower bound is conjectured except for the part depicted in bold, which we prove in this paper.}
\label{fig-dens34}
\end{figure}

We confirm the conjecture for all $3$-cycle densities in the range $[1/72,1/8]$ (Theorems~\ref{thm-reg2} 
and~\ref{thm-reg23}, see the discussion after Conjecture~\ref{conj-matrix} in Section~\ref{sec-prelim}) and
characterize the asymptotic structure of the extremal tournaments for densities in $[1/32,1/8]$ 
(Corollary~\ref{cor-reg2}).
We refer the reader to Figure~\ref{fig-dens34} for the visualization of the 
conjectured feasible region of $3$-cycle and $4$-cycle densities. 

Conjecture~\ref{conj-main} appears to be resistant to the flag algebra method and
we follow a different approach based on spectral analysis of adjacency matrices of tournaments.
We believe that the difficulty in applying the flag algebra method is rooted in the fact that random 
blow-ups of transitive tournaments are far from being the unique extremal examples for Conjecture~\ref{conj-main}.
In particular, a rather complicated family of extremal examples $T$ is described as follows.
Denote the vertices of $T$ by $v_1,\dots,v_n$ and associate $v_i$ with a real number $p_i\in [0,1/2]$, 
$i=1,\ldots,n$. Then, direct the edge $v_iv_j$ from $v_i$ to $v_j$ with probability $1/2+p_i-p_j$.
Note that, if all the values of $p_i$ are either $0$ or $1/2$, then
this construction is nothing more than a random blow-up of a $2$-vertex tournament, i.e., it is identical to the 
examples of~\cite{LinM16} for $3$-cycle density in $[1/32,1/8]$.
For large $n$, this tournament satisfies $t(C_4,T)=g(t(C_3,T))+o(1)$ with high probability (this follows from
Theorem~\ref{thm-reg2}). In particular, all tournaments obtained in this way are extremal with respect to 
Conjecture~\ref{conj-main} in the regime of two parts.
In Corollary~\ref{cor-reg2}, we prove that these are asymptotically the only extremal constructions in this regime.
As we have said, we believe that this complex structure of extremal examples makes it challenging to apply the flag algebra method.
Indeed, the method relies on finding suitable positive semidefinite matrices such that
the vectors of rooted homomorphism densities belong to their kernels for all choices of roots in any extremal example.
However, the richness of the structure of extremal examples restricts significantly
which matrices could possibly appear in the flag algebra argument.

We conclude this introductory section by summarizing the previous results
on the interplay between the densities of $C_3$ and $C_4$ in tournaments.
Linial and Morgenstern~\cite{LinM16} proved $t(C_4,T)\geq\frac{12t(C_3,T)^2}{1+16t(C_3,T)}$ 
which confirmed Conjecture~\ref{conj-main} in the case $t(C_3,T)=1/32+o(1)$. 
They also proved that, for $d\in[0,1/8]$, the asymptotically feasible densities of cycles of length 
four in tournaments with $t(C_3,T)=d+o(1)$ form an interval~\cite[Proof of Lemma 1.3]{LinM16}. 
For the related problem of \textit{maximizing} the $4$-cycle density relative to $t(C_3,T)$, they proved that
$t(C_4,T)\leq 2t(C_3,T)/3$ for all $T$ where equality holds if and only if $|V(T)|\geq4$ 
and every set of $4$ vertices either induces a transitive tournament or contains a 
$4$-cycle.

For completeness, we briefly describe a tight family of tournaments from~\cite{LinM16} 
for the upper bound $t(C_4,T)\leq 2t(C_3,T)/3$. Let $\xi\in [0,1/2]$ and denote the 
vertices by $v_1,\ldots,v_n$ with $n$ large. The edge between $v_i$ and $v_j$, $i\le j$, 
is directed from $v_i$ to $v_j$ if and only if $j\leq i+ (1-\xi)n$. If $\xi=0$, 
we obtain a transitive tournament and if $\xi=1/2$, we obtain the ``circular'' 
$n$-vertex tournament, i.e., the tournament contains edges from $v_i$ to 
$v_{i+1},\ldots,v_{i+\lfloor n/2\rfloor}$ (indices modulo $n$). These tournaments 
achieve all possible values of $t(C_3,T)$ in the limit as $n$ tends to infinity
and their $4$-vertex subtournaments
are either transitive or contain a $4$-cycle (see~\cite[Proof of Observation 2.1]{LinM16});
therefore, they show that the upper boundary in Figure~\ref{fig-dens34} is asymptotically feasible.  

\section{Preliminaries}
\label{sec-prelim}

In this section, we introduce the notation used throughout the paper.
The set of all positive integers is denoted by $\NN$ and
the set of integers $1,\ldots,n$ by $[n]$.
Some of the matrices that we consider have complex eigenvalues and
the complex unit will be denoted by $\unit$.
If $A$ is a matrix (or a vector), then we write $A^T$ for its transpose and $A^*$ for its conjugate transpose;
in particular, if $A$ is real, then $A^T=A^*$.
The \emph{trace} of a square matrix $A$ is the sum of the entries in its diagonal and
is denoted by $\Tr A$.
We let $\left\langle\cdot\mid\cdot\right\rangle$ denote the standard inner (dot) product on $\RR^n$. 
We use $\JJ_n$ to denote the square matrix of order $n$ such that each entry of $\JJ_n$ is equal to one;
if $n$ is clear from the context, we will omit the subscript.
Note that $\JJ_n$ has one eigenvalue equal to $n$ and the remaining $n-1$ eigenvalues are equal to zero.
The $n$-dimensional column vector with all entries equal to one is denoted by $\jj_n$ and
we again omit the subscript when $n$ is clear from the context.
Note that $\JJ_n=\jj_n\jj_n^T$.

\subsection{Tournament matrices}

We say that a square matrix $A$ of order $n$ is a \emph{tournament matrix}
if $A$ is non-negative and $A+A^T=\JJ$;
in particular, if $A$ is a tournament matrix, then each diagonal entry of $A$ is equal to $1/2$.
Every $n$-vertex tournament $T$ can be associated with a tournament matrix $A$ of order $n$,
which we refer to as the \emph{adjacency matrix} of $T$, in the following way.
Each diagonal entry $A$ is equal to $1/2$ and, 
for $i\neq j$, the entry of $A$ in the $i$-th row and the $j$-th column (denoted $A_{i,j}$) is equal to $1$
if $T$ contains an arc from the $i$-th vertex to the $j$-th vertex, and
it is equal to $0$  otherwise.
The following proposition readily follows.

\begin{proposition}
\label{prop-trace}
Let $T$ be a tournament on $n$ vertices, $A$ be the adjacency matrix of $T$ and $\ell\ge 3$.
The number of homomorphisms of $C_\ell$ to $T$ is $\Tr A^\ell+O(n^{\ell-1})$. 
\end{proposition}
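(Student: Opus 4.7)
The plan is to expand $\Tr A^\ell$ as a sum over $\ell$-tuples of vertices and compare it term-by-term with the number of homomorphisms from $C_\ell$ to $T$, absorbing the mismatched contributions into the error term.

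First I would write
\[
\Tr A^\ell \;=\; \sum_{(i_1,\ldots,i_\ell)\in[n]^\ell} A_{i_1,i_2}\,A_{i_2,i_3}\cdots A_{i_{\ell-1},i_\ell}\,A_{i_\ell,i_1},
\]
and split this sum into two pieces according to whether the indices $i_1,\ldots,i_\ell$ are pairwise distinct or not. For a tuple of pairwise distinct indices, no diagonal entry of $A$ appears, so every factor is either $0$ or $1$; the product equals $1$ precisely when the map $j\mapsto i_j$ is a homomorphism of $C_\ell$ into $T$ (necessarily injective). Hence the distinct-tuple part of the sum equals exactly the number of injective homomorphisms of $C_\ell$ into $T$.

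Next I would bound the remaining contribution. The number of $\ell$-tuples in $[n]^\ell$ with at least one repeated coordinate is $O(n^{\ell-1})$ (with an absolute constant depending only on $\ell$), and each entry of $A$ lies in $[0,1]$, so each such term has absolute value at most $1$. Hence the non-distinct-tuple contribution to $\Tr A^\ell$ is $O(n^{\ell-1})$. By exactly the same counting, the number of non-injective homomorphisms from $C_\ell$ to $T$ is also $O(n^{\ell-1})$, because any such homomorphism is determined by the choice of at most $\ell-1$ distinct vertex images. Putting these two observations together,
\[
\Tr A^\ell \;=\; \#\{\text{injective homomorphisms } C_\ell \to T\} + O(n^{\ell-1}) \;=\; \#\{\text{homomorphisms } C_\ell \to T\} + O(n^{\ell-1}),
\]
which is the claim.

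There is no real obstacle here; this is a bookkeeping lemma. The only point deserving a line of care is the observation that, because $T$ has no loops, a homomorphism $C_\ell\to T$ forces $i_j\neq i_{j+1}$ for every $j$, so it is only coincidences among non-adjacent coordinates in the cycle that one has to worry about, and the crude bound $O(n^{\ell-1})$ on the number of such tuples absorbs all of them.
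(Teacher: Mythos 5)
Your proof is correct, and it spells out exactly the routine trace-expansion argument that the paper leaves implicit (the paper states the proposition as ``readily follows'' without proof). In particular, you correctly handle both sources of discrepancy: tuples with repeated indices (where the diagonal entries $1/2$ enter) and non-injective homomorphisms, each of which is $O(n^{\ell-1})$.
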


Recall that the trace of a matrix is equal to the sum of its 
eigenvalues and that the eigenvalues of the $\ell$-th power 
of a matrix are the $\ell$-th powers of its eigenvalues.
In view of Proposition~\ref{prop-trace}, for $\ell\geq1$, we define 
$\sigma_\ell(A)$ for a square matrix $A$ of order $n$ to be
\[\sigma_\ell(A)=\frac{1}{n^\ell}\sum_{i=1}^n \lambda_i^\ell = \frac{1}{n^\ell}\Tr A^\ell\]
where $\lambda_1,\ldots,\lambda_n\in\mathbb{C}$ are the eigenvalues of $A$.
Note that the normalization of the sum is chosen in such a way that $\sigma_1(A)=1/2$ for every tournament matrix~$A$.

Next, we argue that Conjecture~\ref{conj-main} is equivalent to the following.
\begin{conjecture}
\label{conj-matrix}
If $A$ is a tournament matrix, then $\sigma_4(A)\ge g(\sigma_3(A))$.
\end{conjecture}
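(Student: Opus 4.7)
The plan is to prove the two implications separately, using Proposition~\ref{prop-trace} in the forward direction and a blow-up together with a random sampling argument in the reverse direction, with the continuity of $g$ on $[0,1/8]$ bridging the two settings. Continuity of $g$ itself is routine: on each sub-interval corresponding to $z\in[1/(k+1),1/k]$, $g$ is a polynomial in $z$, and at the transition point $z=1/k$ both defining expressions evaluate to $1/(16k^3)$, so $g$ is continuous (hence uniformly continuous) on the compact interval $[0,1/8]$.

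For the direction Conjecture~\ref{conj-matrix}~$\Rightarrow$~Conjecture~\ref{conj-main}: given a tournament $T$ on $n$ vertices with adjacency matrix $A$, Proposition~\ref{prop-trace} yields $t(C_\ell,T)=\sigma_\ell(A)+O(1/n)$ for $\ell\in\{3,4\}$, since dividing the homomorphism count by $n^\ell$ only incurs an $O(1/n)$ correction. Combining this with the assumed inequality $\sigma_4(A)\ge g(\sigma_3(A))$ and the uniform continuity of $g$ gives $t(C_4,T)\ge g(t(C_3,T))+o(1)$ as $n\to\infty$.

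For the reverse direction: given a tournament matrix $A$ of order $n$, I would exhibit a sequence of tournaments $T_m$ on $nm$ vertices with $t(C_\ell,T_m)\to\sigma_\ell(A)$ for $\ell\in\{3,4\}$, and then apply Conjecture~\ref{conj-main}. First, form the balanced blow-up $B=A\otimes\JJ_m$, i.e.\ the $nm\times nm$ matrix whose $(i,j)$ block equals $A_{i,j}\JJ_m$. Since $A+A^T=\JJ_n$ we have $B+B^T=\JJ_{nm}$, so $B$ is itself a tournament matrix; moreover, the eigenvalues of $A\otimes\JJ_m$ are $m\lambda_1(A),\dots,m\lambda_n(A)$ together with $n(m-1)$ zeros, giving $\sigma_\ell(B)=\sigma_\ell(A)$ for every $\ell$. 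Next, sample the random tournament $T_m$ on $nm$ vertices by orienting each edge $uv$, $u\neq v$, independently from $u$ to $v$ with probability $B_{u,v}$, and let $\tilde B$ denote its adjacency matrix. Expanding $\Tr\tilde B^\ell$ as a sum over closed walks of length $\ell$ and using independence of distinct edges, the contribution of tuples of $\ell$ pairwise distinct vertices matches that of $B$ in expectation, while the remaining non-injective tuples contribute $O((nm)^{\ell-1})$ in both $B$ and $\tilde B$; hence $\mathbb{E}[\sigma_\ell(\tilde B)]=\sigma_\ell(A)+O(1/(nm))$. A McDiarmid-type bound, noting that flipping one edge of $T_m$ alters $t(C_\ell,T_m)$ by $O(1/(nm)^2)$, shows that $\sigma_\ell(\tilde B)=\sigma_\ell(A)+o_m(1)$ with high probability. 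Applying Conjecture~\ref{conj-main} to a suitable realisation $T_m$ and letting $m\to\infty$, continuity of $g$ delivers $\sigma_4(A)\ge g(\sigma_3(A))$.

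The main technical subtlety is that the fractional matrix $B$ and the $\{0,1\}$-valued $\tilde B$ disagree on terms such as $B_{i,j}B_{j,i}$, which vanish in any genuine tournament but are typically positive in $B$. Fortunately, all such discrepancies live inside the non-injective contributions and are absorbed into the $O((nm)^{\ell-1})$ error, which is negligible after normalising by $(nm)^\ell$; the rest of the argument is a standard concentration calculation.
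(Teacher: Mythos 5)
Your proposal is correct and follows essentially the same route as the paper: the forward implication via Proposition~\ref{prop-trace}, and the converse via the random blow-up of $A$ (the paper's $k$-fold blow-up with arcs drawn with probabilities $A_{i,j}$ is exactly your $A\otimes\JJ_m$ followed by sampling), with concentration of $t(C_\ell,T_m)$ around $\sigma_\ell(A)$. Your explicit treatment of the continuity of $g$ and the McDiarmid-type concentration merely fills in details the paper leaves implicit ("with high probability"), and phrasing the converse directly rather than contrapositively is an immaterial difference.
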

Indeed, Conjecture~\ref{conj-matrix} implies Conjecture~\ref{conj-main} by Proposition~\ref{prop-trace}.
In the other direction, suppose that there exists a tournament matrix $A$ of order $n$ such that
$\sigma_4(A)<g(\sigma_3(A))$. We consider the following (random) tournament $T$ with $k\cdot n$ vertices, $k\in\NN$:
the vertices of $T$ are split into $n$ sets $V_1,\ldots,V_n$, each containing $k$ vertices, and
a vertex $v\in V_i$ is joined by an arc to a vertex $v'\in V_j$ with probability $A_{i,j}$;
note that $v'$ is joined by an arc to $v$ with probability $A_{j,i}=1-A_{i,j}$,
i.e., the tournament $T$ is well defined. Since $n$ is fixed, for $\ell\in\{3,4\}$ and large $k$, the number of 
homomorphisms from $C_\ell$ to $T$ is $\sigma_\ell(A)(nk)^\ell + O(k^{\ell-1})$ with high 
probability and so Conjecture~\ref{conj-main} fails for $t(C_3,T)\approx \sigma_3(A)$. 

We conclude this subsection by establishing the following lemma. A similar result appears in 
Brauer and Gentry~\cite{BraG68}, but for a slightly different definition of a tournament matrix.

\begin{lemma}
\label{lm-positive}
If $A$ is a tournament matrix, then every eigenvalue of $A$ has non-negative real part.
\end{lemma}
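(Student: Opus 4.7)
The plan is to exploit the defining identity $A+A^T=\JJ$ together with the fact that $\JJ=\jj\jj^T$ is positive semidefinite. Given an eigenvalue $\lambda$ of $A$ with eigenvector $v\in\mathbb{C}^n$, I would consider the quadratic form $v^*Av$ and its relation to $v^*A^Tv$.

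First I would observe that, since $A$ has real entries, $(v^*A^Tv)^*=v^*\bar{A}v=v^*Av$, so $v^*A^Tv=\overline{v^*Av}$. Consequently,
\[v^*(A+A^T)v=v^*Av+\overline{v^*Av}=2\,\Real\bigl(v^*Av\bigr).\]
On the other hand, using $\JJ=\jj\jj^T$,
\[v^*\JJ v=(v^*\jj)(\jj^Tv)=\bigl|\jj^Tv\bigr|^2\ge 0.\]
Combining the two displays gives $\Real(v^*Av)\ge 0$.

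Next I would use that $v$ is an eigenvector: $v^*Av=\lambda v^*v=\lambda\|v\|^2$, and since $\|v\|^2>0$, dividing yields $\Real(\lambda)\ge 0$, which is the claim.

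There is no real obstacle; the argument is a one-line application of $A+A^T=\JJ$ and the positive semidefiniteness of $\JJ$. The only thing I would be careful about is the conjugation identity for $v^*A^Tv$, which relies on $A$ being real-valued — this is immediate from the definition of a tournament matrix but worth making explicit in the write-up.
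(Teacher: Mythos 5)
Your proof is correct and follows essentially the same route as the paper: evaluate the quadratic form $v^*(A+A^T)v=v^*\JJ v=\lvert\jj^Tv\rvert^2\ge 0$ at an eigenvector and conclude $\Real(\lambda)\ge 0$. The only cosmetic difference is that you pass through $2\Real(v^*Av)$ while the paper writes the same quantity as $(\lambda+\overline{\lambda})v^*v$ directly.
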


\begin{proof}
Let $\lambda$ be any eigenvalue of $A$ and let $v$ be a corresponding eigenvector.
Observe that the following holds:
\begin{align*}
0\le\overline{(\jj^T v)}(\jj^T v)&=v^*\JJ v=v^*(A+A^T)v=v^*(Av)+(v^*A^T)v\\
   &=v^*(\lambda+\overline{\lambda})v=(\lambda+\overline{\lambda})v^*v\,.
\end{align*}   
Since $v^*v$ is a non-negative real number, it follows that $\lambda+\overline{\lambda}$ is a non-negative real.
In particular, the real part of $\lambda$ is non-negative.
\end{proof}

\subsection{Tournament limits}

One of the substantial recent developments in combinatorics 
is the theory of graph limits which aims to provide analytic tools to represent and analyze large graphs. 
In an analogous way, one can develop a limit theory for tournaments,
in which essentially all of the foundational results for graph limits,
which can be found, e.g., in the monograph on graph limits by Lov\'asz~\cite{Lov12},
translate to similar statements for tournament limits with essentially the same proofs. Below, we define tournament
limits and outline some of the basic results that we will use.

A \emph{tournament limit} is a measurable function $W:[0,1]^2\to [0,1]$ such that $W(x,y)+W(y,x)=1$
for all $(x,y)\in[0,1]^2$. One can define the density of the cycle $C_{\ell}$ in $W$ as follows:
\[t(C_{\ell},W)=\int_{x_1,\ldots,x_\ell\in [0,1]}W(x_1,x_2)W(x_2,x_3)\cdots W(x_{\ell-1},x_{\ell})W(x_{\ell},x_1)\dd x_1\cdots x_{\ell}\,.\]
Note that any $n$-dimensional tournament matrix $A$ can be represented by a tournament 
limit $W_A$ by dividing $[0,1]$ into sets $I_1,\dots,I_n$ of measure 
$1/n$ and setting $W$ equal to $A_{i,j}$ on the set $I_i\times I_j$. 
It is easily observed that $t(C_\ell,W_A)$ is precisely $\sigma_\ell(A)$.
The following proposition links densities of cycles in tournament limits and in tournaments.
\begin{proposition}
\label{prop-limit}
The following two statements are equivalent for every sequence $(s_\ell)_{\ell\ge 3}$ of non-negative reals:
\begin{itemize}
\item There exists a tournament limit $W$ such that $t(C_{\ell},W)=s_{\ell}$ for every $\ell\ge 3$.
\item There exists a sequence $(T_i)_{i\in\NN}$ of tournaments with increasing orders such that
      \[\lim_{i\to\infty}t(C_{\ell},T_i)=s_{\ell}\]
      for every $\ell\ge 3$.
\end{itemize}
\end{proposition}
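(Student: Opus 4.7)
The plan is to mirror the standard arguments from the theory of graph limits (as developed by Lov\'asz--Szegedy and as presented in the monograph already referenced in the introduction) adapted to the tournament setting; the excerpt remarks explicitly that such a translation is routine, so the main task is to point to the correct ingredients and check that the asymmetry condition $W(x,y)+W(y,x)=1$ (in place of symmetry) causes no difficulty.

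For the implication from limit to sequence, I would use $W$-random sampling. Given a tournament limit $W$ with $t(C_\ell,W)=s_\ell$, I construct a random $n$-vertex tournament $T_n$ by drawing $x_1,\dots,x_n$ independently and uniformly from $[0,1]$ and then, independently for each $i<j$, orienting the arc between the $i$-th and $j$-th vertex from $i$ to $j$ with probability $W(x_i,x_j)$. For each fixed $\ell\ge 3$, one checks directly from the definition of $t(C_\ell,W)$ that $\mathbb{E}[t(C_\ell,T_n)]=t(C_\ell,W)+O(1/n)$, and one obtains exponential concentration by applying the Azuma--Hoeffding inequality to the vertex-exposure martingale, since exposing one further vertex (together with all arcs incident to it and its associated $x_i$) changes $t(C_\ell,T_n)$ by at most $O(1/n)$. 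A Borel--Cantelli argument over the events $\{|t(C_\ell,T_n)-s_\ell|>1/n\}$ for $\ell\le\log n$ then gives, almost surely, a realization $T_1,T_2,\dots$ with $t(C_\ell,T_n)\to s_\ell$ for every $\ell\ge 3$ simultaneously.

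For the reverse implication, I would rely on the compactness of the space of tournament limits under the cut distance. Associating to each $T_i$ its canonical tournament limit $W_{T_i}$ (as described just before the statement), the tournament analogue of the Lov\'asz--Szegedy compactness theorem, whose proof by iterated weak regularity partitions applies verbatim after replacing the symmetry condition by $W(x,y)+W(y,x)=1$, guarantees a subsequence $W_{T_{i_k}}$ converging in cut distance to some tournament limit $W$. Since cycle densities are continuous in the cut distance (the standard counting lemma carries over, bounding $|t(C_\ell,W')-t(C_\ell,W'')|$ by a constant times the cut distance) and since $t(C_\ell,W_{T_{i_k}})=t(C_\ell,T_{i_k})+O(1/|V(T_{i_k})|)$ (the $O(1/n)$ error coming from the diagonal entries of the adjacency matrix), one concludes $t(C_\ell,W)=\lim_k t(C_\ell,T_{i_k})=s_\ell$ for every $\ell\ge 3$.

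The only potential obstacle is setting up the analogue of graph limit theory for tournaments rigorously, but because the domain and codomain of a tournament limit are identical to those of a graphon and the condition $W(x,y)+W(y,x)=1$ is preserved by all the relevant operations (rearrangement, averaging over partitions, weak-$*$ limits in $L^\infty([0,1]^2)$), every step of the graphon arguments transports without any new ingredient. In particular, no analogue of the unique asymmetric features of tournaments (such as the nontrivial imaginary spectrum of $A$) enters the limit-theoretic picture at this stage; the proposition is a soft general statement whose content is entirely inherited from the symmetric theory.
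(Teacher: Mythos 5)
Your proposal follows essentially the same route as the paper: the forward direction via $W$-random tournaments, and the reverse direction via the weak-regularity/martingale machinery of Lov\'asz and Szegedy transplanted to kernels satisfying $W(x,y)+W(y,x)=1$ (the paper invokes that argument directly to build the limit from the sequence, while you invoke the cut-distance compactness theorem plus the counting lemma, whose proofs are the same circle of ideas, and taking a subsequence is harmless since the cycle densities of the full sequence converge). One small correction in the forward direction: Azuma--Hoeffding with vertex-exposure differences of order $\ell/n$ gives deviation probability about $\exp(-c\,t^2 n/\ell^2)$, which does not decay at the scale $t=1/n$ you propose (and the expectation itself already differs from $t(C_\ell,W)$ by $O_\ell(1/n)$), so the Borel--Cantelli events need a more generous threshold, e.g.\ $n^{-1/4}$ for $\ell\le\log n$; with that adjustment the argument goes through.
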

The first statement easily implies the second by letting $T_i$ be a \emph{$W$-random tournament} of order $i$;
that is, we let $x_1,\dots,x_i$ be independent uniformly random points of $[0,1]$ and join the $i$th vertex 
to the $j$th with probability $W(x_i,x_j)$. For the other direction, the tournament limit $W$ can be constructed
by adapting one of the existing arguments in the graph case,
e.g., the argument of Lov\'asz and Szegedy~\cite{LovS06} based on weak regularity and the Martingale Convergence Theorem.
In light of Proposition~\ref{prop-limit}, Conjecture~\ref{conj-main} is equivalent to the following.
\begin{conjecture}
\label{conj-limit}
For every tournament limit $W$, it holds that $t(C_4,W)\ge g(t(C_3,W))$.
\end{conjecture}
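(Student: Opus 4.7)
The plan is to work with the spectral decomposition of the adjacency matrix. Since Conjectures~\ref{conj-main},~\ref{conj-matrix}, and~\ref{conj-limit} are equivalent (by Propositions~\ref{prop-trace} and~\ref{prop-limit} and the reduction discussed in Section~\ref{sec-prelim}), it is enough to prove that $\sigma_4(A)\ge g(\sigma_3(A))$ for every tournament matrix $A$ of order $n$. I would write $A = \JJ/2 + S$ where $S := A - \JJ/2$ is real antisymmetric with entries in $[-1/2,1/2]$, and expand $\Tr A^3$ and $\Tr A^4$ using the cyclic property of the trace together with $\jj^T S\jj = 0$ (from antisymmetry) and $\JJ S\JJ = 0$ (a consequence of $\jj^T S\jj = 0$). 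All the mixed terms collapse and one is left with
\[
\Tr A^3 = \frac{n^3}{8} - \frac{3}{2}\|S\jj\|^2 \quad\text{and}\quad \Tr A^4 = \frac{n^4}{16} - n\|S\jj\|^2 + \Tr S^4,
\]
so that $\sigma_4(A) = -\tfrac{1}{48} + \tfrac{2}{3}\sigma_3(A) + \Tr S^4/n^4$. The conjecture thus reduces to a spectral question about antisymmetric matrices: lower-bound $\Tr S^4$ in terms of $\|S\jj\|^2$, $n$, and possibly further invariants of $S$.

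For the two-part regime $\sigma_3\in[1/32,1/8]$ I would prove the sharp inequality $n^2\Tr S^4 \ge 2\|S\jj\|^4$. The non-zero eigenvalues of the real antisymmetric $S$ come in conjugate pairs $\pm\unit a_k$, each associated with a $2$-dimensional real invariant subspace having an orthonormal basis $u_k,v_k$ with $Su_k = a_k v_k$ and $Sv_k = -a_k u_k$. A direct calculation gives $\|S\jj\|^2 = \sum_k a_k^2 q_k$ and $\Tr S^4 = 2\sum_k a_k^4$, where $q_k := \langle\jj,u_k\rangle^2 + \langle\jj,v_k\rangle^2$ is the squared projection of $\jj$ onto the $k$th invariant subspace; since these subspaces are pairwise orthogonal, $\sum_k q_k \le \|\jj\|^2 = n$. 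The Cauchy--Schwarz inequality $(\sum_k a_k^2 q_k)^2 \le (\sum_k a_k^4)(\sum_k q_k^2)$, combined with the elementary bound $\sum_k q_k^2 \le (\max_k q_k)(\sum_k q_k) \le n\cdot n = n^2$, then yields the target inequality. Substituting back gives $\sigma_4 \ge -\tfrac{1}{144} + \tfrac{4}{9}\sigma_3 + \tfrac{8}{9}\sigma_3^2$, which agrees with $g(\sigma_3)$ throughout $[1/32,1/8]$. Tracing the equality cases forces $\jj$ to lie inside a single $2$-dimensional invariant subspace of $S$, which translates into $S = p\jj^T - \jj p^T$ for some vector $p$, matching the extremal family of Corollary~\ref{cor-reg2}.

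For the three-part regime $\sigma_3\in[1/72,1/32]$ the bound above is no longer sharp, because in the conjectured extremal examples a nontrivial portion of $\jj$ lies in the kernel of $S$ (so $\sum_k q_k < n$ and the first Cauchy--Schwarz loses). My plan here is to bring in a second spectral invariant, $\sigma_2(A) = 1/4 - \|S\|_F^2/n^2$, via the Frobenius-orthogonal decomposition $S = P + R$ with $P := \phi\jj^T - \jj\phi^T$ for $\phi := (S\jj)/n$ and $R\jj = R^T\jj = 0$, which yields the identity $\|S\|_F^2 = 2\|S\jj\|^2/n + \|R\|_F^2$ and an additive split of $\Tr S^4$ into a rank-$2$ contribution from $P$ and interaction terms involving $R$. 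Together with the rank-versus-Frobenius bound $\Tr S^4 \ge \|S\|_F^4/r$ for the (even) rank $r$ of $S$ and the pointwise constraint $|S_{ij}|\le 1/2$, a two-parameter optimisation in $(\|S\jj\|^2,\|S\|_F^2)$ should pin down $\Tr S^4$ along the extremal curve and recover $g$ on $[1/72,1/32]$. The main obstacle---and the reason I expect the method to stop short of the full range $[0,1/8]$---is that the conjectured extremal examples in the regime of $k\ge 4$ parts carry $k$ distinct eigenvalue scales in $S$: no fixed finite list of low-order spectral invariants can distinguish all nearby extremal configurations, and this proliferation of extremal structures is precisely the feature the introduction identifies as the obstruction to a flag-algebraic attack.
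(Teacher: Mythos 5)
The statement you are proving is a conjecture which the paper itself establishes only partially: Theorem~\ref{thm-reg2} (two-part regime, with equality characterization) and Theorem~\ref{thm-reg23} (down to $1/72$). Your treatment of the regime $\sigma_3\in[1/32,1/8]$ is correct and is essentially the paper's proof of Theorem~\ref{thm-reg2} in different clothing: your $S=A-\JJ/2$ is $-B/2$, your invariant-subspace weights $q_k$ play the role of the angles $\alpha_i$ (i.e.\ $q_k=n\cos^2\alpha_k$), and the chain ``Cauchy--Schwarz plus $\sum_k q_k\le n$'' is identical to \eqref{eq-sigmaCS} together with $\sum_i\cos^4\alpha_i\le 1$; your trace identities and the resulting quadratic $-\tfrac1{144}+\tfrac49\sigma_3+\tfrac89\sigma_3^2=g(\sigma_3)$ on $[1/32,1/8]$, and the equality analysis forcing $S=p\jj^T-\jj p^T$, all check out.

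The gap is in your plan for $[1/72,1/32]$. The invariants you propose to optimize over --- $\|S\jj\|^2$ (fixed by $\sigma_3$), $\|S\|_F^2$ (i.e.\ $\sigma_2$), together with a rank-versus-Frobenius bound --- provably cannot certify $g$ below $1/32$. In the invariant-subspace coordinates, the problem ``minimize $\Tr S^4=2\sum_k a_k^4$ subject to $\sum_k a_k^2q_k=\|S\jj\|^2$, $\sum_k q_k\le n$, and any constraint on $\sum_k a_k^2$'' is still solved by the rank-two configuration in which all of $\jj$ sits in the top subspace: since $\|S\jj\|^2\le n\max_k a_k^2$, the minimum is exactly $2\|S\jj\|^4/n^2$, and this optimizer has $\|S\|_F^2=2\|S\jj\|^2/n\le n^2/6$, so the Frobenius constraint (even imposed with equality, since small $a_k$'s with $q_k=0$ can absorb any Frobenius mass while contributing $o(n^4)$ to $\Tr S^4$) never becomes active, and the rank bound $\Tr S^4\ge\|S\|_F^4/r$ is vacuous because $r=\Theta(n)$ for genuine tournaments. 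Hence the best bound extractable from your two-parameter optimization is the same quadratic as in the two-part regime, which is strictly below $g$ on $[1/72,1/32)$ --- at $\sigma_3=1/72$ it is even negative ($\approx-0.0006$), while $g(1/72)=1/432$. What rules out the rank-two configuration below $1/32$ is the entrywise bound $|S_{ij}|\le 1/2$, equivalently the non-negativity of $A$, and you offer no mechanism to convert that pointwise constraint into a spectral lower bound on $\Tr S^4$; the unaddressed ``interaction terms'' in $\Tr(P+R)^4$ are a symptom of the same missing ingredient. This conversion is precisely what the paper's three-part argument supplies, and it works with the eigenvalues of $A$ itself rather than of $S$: Perron--Frobenius guarantees the spectral radius is a real eigenvalue $\rho n$, Lemma~\ref{lm-positive} gives non-negative real parts, Lemma~\ref{lm-cubes} converts $\sigma_3\ge 1/72$ into $\rho\ge z\ge 1/6$, and the Lagrange-multiplier analysis of the problem \spectrum{} (Lemma~\ref{lm-lagrange}) together with Lemma~\ref{lm-linial} finishes the optimization. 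Some ingredient of this kind --- exploiting non-negativity of $A$, not just skew-symmetry of $S$ and low-order norms --- is indispensable for your step two, and of course neither route proves the conjecture on $[0,1/72)$, where the paper leaves it open.
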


The notion of regularity decompositions of graphs readily extends to tournaments.
We present here the notion of weak regular partitions introduced by Frieze and Kannan in~\cite{FriK99}
adapted to the setting of tournament limits.
We use $|X|$ to denote the measure of a measurable subset $X$ of $[0,1]$.
Given a tournament limit $W$ and $\varepsilon\in(0,1)$, a partition $Z_1,\ldots,Z_n$ of $[0,1]$ into sets 
of measure $1/n$ is \emph{weak $\varepsilon$-regular} for $W$ if
\[\left|\int_{(x,y)\in X\times Y}W(x,y)\dd x\dd y-\sum_{i,j=1}^n A_{i,j}\cdot|Z_i\cap X|\cdot |Z_j\cap Y|\right|\le\varepsilon\]
for all measurable subsets $X$ and $Y$ of $[0,1]$,
where $A$ is the tournament matrix defined by
\[A_{i,j}=\frac{\int_{(x,y)\in Z_i\times Z_j}W(x,y)\dd x\dd y}{|Z_i|\cdot|Z_j|}\,.\]
We say that a tournament limit $W'$ is a \emph{weak $\varepsilon$-regular approximation} of $W$
if there exists a weak $\varepsilon$-regular partition $\{Z_1,\ldots,Z_n\}$ such that
$W'(x,y)=A_{i,j}$ for $(x,y)\in Z_i\times Z_j$, $i,j\in [n]$,
where $A$ is the tournament matrix associated with the partition.

The results of Frieze and Kannan~\cite{FriK99} adapted to the setting of tournament limits and
the corresponding arguments for graph limits~\cite{LovS06} yield the following:
for every tournament limit $W$ and $k\geq2$, there exists a weak $1/k$-regular partition $\{Z_{k,1},\ldots,Z_{k,n_k}\}$
with the following properties: (a) $n_k$ is bounded by a function of $k$, and (b) the partitions are \emph{refining} 
in the sense that, for every $k<k'$ and $i'\in [n_{k'}]$,
the set $Z_{k',i'}$ is contained in $Z_{k,i}$ for some $i\in [n_k]$.
It can be shown analogously to the graph case that
the corresponding weak $1/k$-regular approximations converge to $W$ in $L_1$.
In particular, it holds that
\[\lim_{k\to\infty}\sigma_{\ell}(A_k)=t(C_{\ell},W)\]
for every $\ell\ge 3$, 
where, for $k\in\NN$, $A_k$ is the tournament matrix associated with the partition $Z_{k,1},\ldots,Z_{k,n_k}$.

We conclude with a proposition on the density of $C_3$
in a weak regular approximation of a tournament limit.
The proof of the proposition is also valid in a more general setting of step approximations of tournament limits,
which need not be weak regular, however,
we prefer stating the proposition in the restricted setting of weak regular approximations
to avoid introducing additional notation not needed for our exposition.

\begin{proposition}
\label{prop-WT3}
Let $W$ be a tournament limit and $W'$ a weak $\varepsilon$-regular approximation of $W$.
It holds that $t(C_3,W)\le t(C_3,W')$.
\end{proposition}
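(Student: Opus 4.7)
The plan is to write $W=W'+\Delta$ with $\Delta:=W-W'$, exploit two structural properties of $\Delta$, and expand $t(C_3,W)$ into constant, linear, quadratic, and cubic pieces in $\Delta$. The two properties I will use are: (i) \emph{antisymmetry}, $\Delta(x,y)+\Delta(y,x)=0$, which follows because $W$ and $W'$ are both tournament limits, and (ii) \emph{zero block averages}, $\int_{Z_i\times Z_j}\Delta(x,y)\dd x\dd y=0$ for all $i,j\in[n]$, which is immediate from the definition of $A_{i,j}$ as the mean of $W$ on $Z_i\times Z_j$. Note that the weak $\varepsilon$-regularity of the partition will not even be needed; only the fact that $W'$ is the block-averaged step function matters.

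Expanding
\[
t(C_3,W)=\int (W'+\Delta)(x_1,x_2)\,(W'+\Delta)(x_2,x_3)\,(W'+\Delta)(x_3,x_1)\dd x_1\dd x_2\dd x_3,
\]
the constant term is $t(C_3,W')$. Each of the three linear-in-$\Delta$ terms, for instance $\int W'(x_1,x_2)W'(x_2,x_3)\Delta(x_3,x_1)$, equals $\sum_{i,j,k}A_{i,j}A_{j,k}|Z_j|\int_{Z_k\times Z_i}\Delta(x_3,x_1)\dd x_3\dd x_1$, which vanishes by property (ii). The cubic term $\int\Delta(x_1,x_2)\Delta(x_2,x_3)\Delta(x_3,x_1)$ vanishes by the substitution $x_2\leftrightarrow x_3$: each of the three $\Delta$ factors changes sign, so the integral equals its negative.

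It remains to bound the three quadratic-in-$\Delta$ terms. By cyclic relabeling of $(x_1,x_2,x_3)$ they are all equal, so it suffices to analyze
\[
Q:=\int W'(x_1,x_2)\,\Delta(x_2,x_3)\,\Delta(x_3,x_1)\dd x_1\dd x_2\dd x_3.
\]
Using $\Delta(x_3,x_1)=-\Delta(x_1,x_3)$ and fixing $x_3=z$, the remaining integrand in $(x_1,x_2)$ becomes $-W'(x_1,x_2)\Delta(x_1,z)\Delta(x_2,z)$, in which the product $\Delta(x_1,z)\Delta(x_2,z)$ is symmetric under $x_1\leftrightarrow x_2$. Replacing $W'$ by its symmetrization $\tfrac12(W'(x_1,x_2)+W'(x_2,x_1))=\tfrac12$ yields
\[
Q=-\tfrac12\int_0^1\Bigl(\int_0^1\Delta(x,z)\dd x\Bigr)^{\!2}\dd z\;\le\;0.
\]
Summing the three equal quadratic contributions and combining with the vanishing linear and cubic parts gives $t(C_3,W)-t(C_3,W')=3Q\le 0$, which is the claim.

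The only step requiring any cleverness is the symmetrization in the final display; the rest is bookkeeping driven by the antisymmetry of $\Delta$ and the block-average identity. I do not foresee any serious obstacle.
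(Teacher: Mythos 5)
Your proof is correct, and it takes a genuinely different route from the paper. The paper first derives the closed-form identity $t(C_3,U)=\tfrac12-\tfrac32\int_x\bigl(\int_y U(x,y)\dd y\bigr)^2\dd x$ valid for any tournament limit $U$, which reduces the claim to comparing the $L_2$-norms of the degree functions of $W$ and $W'$; since the degree function of $W'$ is the block average of that of $W$, the Cauchy--Schwarz (Jensen) inequality on each part $Z_i$ finishes the job. You instead expand $t(C_3,W)$ multilinearly in $\Delta=W-W'$, kill the linear terms with the zero-block-average property and the cubic term with antisymmetry, and evaluate the quadratic contribution exactly, obtaining $t(C_3,W)-t(C_3,W')=-\tfrac32\int_z\bigl(\int_x\Delta(x,z)\dd x\bigr)^2\dd z\le 0$. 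The two arguments are consistent (indeed $\int_x\Delta(x,z)\dd x$ is, up to sign, the difference of the degree functions, and the block-average structure makes the paper's norm comparison coincide with your squared $L_2$-distance), but yours dispenses with any inequality beyond nonnegativity of a square and yields an exact defect formula, which is slightly more informative in the equality case. Your observation that weak $\varepsilon$-regularity is never used and only the block-averaging structure of $W'$ matters is also accurate; the paper makes the same remark just before the proposition, noting the proof works for general step approximations.
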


\begin{proof}
We begin by showing that any tournament limit $U$ satisfies
\begin{equation}
t(C_3,U)=\frac{1}{2}-\frac{3}{2}\int_{x\in [0,1]}\left(\int_{y\in [0,1]}U(x,y)\dd y\right)^2\dd x\,.
\label{eq-UT3}
\end{equation}
To do this, we derive two identities based on the symmetry of variables $x$, $y$ and $z$. Firstly,
\[ 1 = \int_{x,y,z\in [0,1]}(U(x,y)+U(y,x))(U(x,z)+U(z,x))(U(y,z)+U(z,y))\dd x\dd y\dd z \]
\[ = 2\int_{x,y,z\in [0,1]} U(x,y)U(y,z)U(z,x)\dd x\dd y\dd z\]
\[ +\ 6\int_{x,y,z\in [0,1]} U(x,y)U(y,z)U(x,z)\dd x\dd y\dd z 
\]
\begin{equation}
  = 2t(C_3,U)+6\int_{x,y,z\in [0,1]} U(x,y)U(y,z)U(x,z)\dd x\dd y\dd z \,, \label{eq-UT3a}
\end{equation}
and similarly,
\[ \int_{x\in [0,1]}\left(\int_{y\in [0,1]}U(x,y)\dd y\right)^2\dd x = \int_{x,y,z\in [0,1]}U(x,y)U(x,z)\dd x\dd y\dd z \]
\[ = \int_{x,y,z\in [0,1]}U(x,y)U(x,z)(U(y,z)+U(z,y))\dd x\dd y\dd z \]
\begin{equation}
  = 2\int_{x,y,z\in [0,1]} U(x,y)U(x,z)U(y,z)\dd x\dd y\dd z \,. \label{eq-UT3b}
\end{equation}
Noticing that the integrals on the last lines in \eqref{eq-UT3a} and \eqref{eq-UT3b} are the same, the equality \eqref{eq-UT3} is obtained.
Hence, the inequality from the statement of the proposition is equivalent to
\begin{equation}
   \int_{x\in [0,1]} f'(x)^2\dd x\le
   \int_{x\in [0,1]} f(x)^2\dd x\,,\label{eq-WT3}
\end{equation}
where for brevity we have set
\[ f'(x)=\int_{y\in [0,1]}W'(x,y)\dd y \qquad\mbox{and}\qquad f(x)=\int_{y\in [0,1]}W(x,y)\dd y\,.\]
Since
\[f'(x)=\frac{1}{|Z_i|}\int_{x'\in Z_i}f(x')\dd x'\]
for every $x$ in a part $Z_i$ of the weak $\varepsilon$-regular partition defining the tournament limit $W'$, it holds that
\begin{align}
\int_{x\in Z_i} f'(x)^2\dd x &=|Z_i|\left(\frac{1}{|Z_i|}\int_{x'\in Z_i}f(x')\dd x'\right)^2 \nonumber \\
&\le\int_{x\in Z_i} f(x)^2\dd x \,,
\label{eq-WT3Z}
\end{align}
where the last line follows from the Cauchy--Schwarz inequality. 
Summing the inequalities obtained from applying \eqref{eq-WT3Z} to each $Z_i$ yields \eqref{eq-WT3}.
\end{proof}

\section{Regime of two parts}
\label{sec-reg2}

Our goal in this section is to prove Conjecture~\ref{conj-matrix} in the case that $\sigma_3(A)\ge 1/32$, as well as describe the 
tournament matrices which achieve equality. We then apply this result to characterise the extremal tournament 
limits for Conjecture~\ref{conj-limit} for $t(C_3,W)\geq 1/32$. Throughout the proof of the next theorem, we will frequently 
use the property that the trace of a product of matrices is invariant under ``cyclic permutations'', i.e., 
$\Tr\left(M_1M_2\cdots M_k\right)=\Tr\left(M_2 \cdots M_kM_1\right)$. 

\begin{theorem}
\label{thm-reg2}
Let $A$ be a tournament matrix of order $n$.
If $\sigma_3(A)\ge 1/32$, then $\sigma_4(A)\ge g(\sigma_3(A))$ and
equality holds if and only if there exists a vector $z\in\RR^n$ such that
$A_{i,j}=1/2+z_i-z_j$ for $i,j\in [n]$.
\end{theorem}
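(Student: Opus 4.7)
The plan is to write $B := A - \JJ/2$, which is real skew-symmetric since $A + A^T = \JJ$, and to rewrite the two cycle densities as polynomials in the two spectral invariants $\|B\jj\|^2$ and $\Tr B^4$ of $B$. Most of the traces arising in the expansion of $(\JJ/2 + B)^\ell$ vanish by skew-symmetry: the identity $\jj^T B \jj = 0$ handles $\Tr(\JJ B) = \Tr(B \JJ) = \Tr(\JJ B \JJ) = 0$; one has $\Tr B^3 = 0$ because the nonzero eigenvalues of $B$ come in pairs $\pm\unit\beta_k$; and the remaining surviving contributions all collapse via $\jj^T B^2 \jj = -\|B\jj\|^2$, a consequence of $B^T = -B$. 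Collecting terms would give
\[
\sigma_3(A) \;=\; \frac{1}{8} - \frac{3\|B\jj\|^2}{2n^3},
\qquad
\sigma_4(A) \;=\; \frac{1}{16} - \frac{\|B\jj\|^2}{n^3} + \frac{\Tr B^4}{n^4}.
\]
Setting $s := \|B\jj\|^2/n^3$ and eliminating $z$ in the definition of $g$ on the two-parts range via $z(1-z) = 4s$ yields $g(\sigma_3(A)) = \tfrac{1}{16} - s + 2s^2$, so the desired inequality reduces to the clean statement
\[
n^2 \Tr B^4 \;\geq\; 2\|B\jj\|^4.
\]

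To establish this reformulation, I set $C := -B^2$, which is real symmetric and positive semidefinite with $\|B\jj\|^2 = \jj^T C \jj$, and chain three Rayleigh-type inequalities:
\[
(\jj^T C \jj)^2 \;\leq\; n\, \jj^T C^2 \jj \;\leq\; n^2\, \lambda_{\max}(C)^2 \;\leq\; \frac{n^2}{2}\,\Tr C^2 \;=\; \frac{n^2}{2}\,\Tr B^4.
\]
The first step is Cauchy--Schwarz on $\langle \jj, C\jj \rangle$; the second is the Rayleigh quotient bound $\jj^T C^2 \jj \leq \|\jj\|^2 \lambda_{\max}(C^2) = n\, \lambda_{\max}(C)^2$; and the third, crucial step uses that the nonzero eigenvalues of the real skew-symmetric matrix $B$ come in pairs $\pm\unit\beta_k$, so every nonzero eigenvalue of $C$ has even multiplicity, and in particular $2\lambda_{\max}(C)^2 \leq \Tr C^2$. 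Notably this chain uses neither the nonnegativity of $A$ nor the hypothesis $\sigma_3(A) \geq 1/32$ itself --- that hypothesis enters only through the formula for $g$ on the two-parts regime.

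For the equality characterization, the \emph{if} direction is a direct check: when $B = z\jj^T - \jj z^T$, the matrix $C$ has rank at most two, all its nonzero eigenvalues equal $n\|z\|^2 - (\jj^T z)^2$, and $\jj$ is an eigenvector for that eigenvalue, so the three inequalities above become equalities. For the \emph{only if} direction, I would trace back each tightness condition: away from the trivial case $B = 0$ (which is $A = \JJ/2$, $z = 0$), tightness forces (i) $C\jj$ parallel to $\jj$, (ii) $\jj$ to lie in the top eigenspace of $C$, and (iii) $C$ to have a single nonzero eigenvalue of multiplicity exactly two, so $B$ has rank exactly two and its image contains $\jj$. Any rank-two real skew-symmetric matrix whose image is $\mathrm{span}\{\jj, w\}$ must equal $\alpha(\jj w^T - w \jj^T)$ for some $\alpha \in \RR$, and setting $z := -\alpha w$ recovers the desired identity $A_{i,j} = 1/2 + z_i - z_j$. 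I expect the main obstacle to be the careful bookkeeping of these equality conditions, in particular arguing cleanly that Cauchy--Schwarz tightness forces $\jj$ into the image of $B$ so that the rank-two structure can be aligned with the all-ones direction; the spectral inequality itself, once the reduction is in place, is short.
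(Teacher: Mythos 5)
Your proposal is correct, and at its core it is the paper's own argument in different clothing: the same decomposition $A=\JJ/2+B$ with $B$ skew-symmetric, the same trace expansions (your formulas for $\sigma_3(A)$ and $\sigma_4(A)$ and the identity $g(\sigma_3(A))=\tfrac{1}{16}-s+2s^2$ on the two-part range all check out), and a reduction to exactly the inequality the paper labels \eqref{eq-sigma34}: your $n^2\Tr B^4\ge 2\|B\jj\|^4$ is scale-invariant in $B$, so it coincides with $\sum_i\lambda_i^4\ge\bigl(\sum_i\lambda_i^2\cos^2\alpha_i\bigr)^2$ under the paper's normalization $B=\JJ-2A$. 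Where you genuinely differ is in how this inequality and its equality case are handled. The paper invokes the real canonical block form $B=ULU^T$, rotates each $2$-dimensional block so that the even-indexed basis vectors are orthogonal to $\jj$, and applies Cauchy--Schwarz against $\sum\cos^4\alpha_i\le 1$; this lets it read the extremal structure directly off the canonical form. You instead work with $C=-B^2=B^TB$ and chain Cauchy--Schwarz, the Rayleigh-quotient bound, and the even multiplicity of the nonzero eigenvalues of $C$, avoiding the canonical form altogether, at the price of needing the (easy, and correct) classification of rank-two skew-symmetric matrices with $\jj$ in their image to finish the equality case. Both routes are sound: yours is a bit more self-contained on the inequality side, the paper's a bit slicker on the equality side, and the bookkeeping you flag as the remaining obstacle does go through exactly as you sketch --- tightness end-to-end forces each link in your chain to be tight, whence $C$ has a single nonzero eigenvalue of multiplicity two, its top eigenspace is the image of $B$ and contains $\jj$, and the rank-two classification then yields $A_{i,j}=1/2+z_i-z_j$.
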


\begin{proof}
Fix a tournament matrix $A$ of order $n$.
Let $B=\JJ-2A$. Note that $B$ is a skew-symmetric matrix, i.e., $B=-B^T$.
It follows (see, e.g.,~\cite[p. 293]{Gan98}) that $B$ can be written as
$B=ULU^T$ where the columns $v_1,v_2,\dots,v_n$ of $U$ form an orthonormal 
basis of $\RR^n$ and $L$ has the form
\[L=
  \begin{bmatrix}
  0 &  \lambda_1n & 0 & 0 & \cdots & 0 & 0 \\
  -\lambda_1n & 0 & 0 & 0 & \cdots & 0 & 0 \\
  0 & 0 & 0 &  \lambda_2n & \cdots & 0 & 0 \\
  0 & 0 & -\lambda_2n & 0 & \cdots & 0 & 0 \\
  \vdots & \vdots & \vdots & \vdots & \ddots & \vdots & \vdots \\
  0 & 0 & 0 & 0 & \cdots & 0 & \lambda_kn \\
  0 & 0 & 0 & 0 & \cdots & -\lambda_kn & 0
  \end{bmatrix}\]
if $n$ is even, and
\[L=
  \begin{bmatrix}
  0 &  \lambda_1n & 0 & 0 & \cdots & 0 & 0 & 0 \\
  -\lambda_1n & 0 & 0 & 0 & \cdots & 0 & 0 & 0 \\
  0 & 0 & 0 &  \lambda_2n & \cdots & 0 & 0 & 0 \\
  0 & 0 & -\lambda_2n & 0 & \cdots & 0 & 0 & 0 \\
  \vdots & \vdots & \vdots & \vdots & \ddots & \vdots & \vdots & \vdots \\
  0 & 0 & 0 & 0 & \cdots & 0 &  \lambda_kn & 0 \\
  0 & 0 & 0 & 0 & \cdots & -\lambda_kn & 0 & 0 \\
  0 & 0 & 0 & 0 & \cdots & 0 & 0 & 0
  \end{bmatrix}\]
otherwise, where $k=\lfloor n/2\rfloor$ and $\lambda_1,\ldots,\lambda_{k}$ are real numbers. (Note that they are not the eigenvalues of $B$.)
Since replacing $v_{2i-1}$ and $v_{2i}$ with
$v_{2i-1} \cos\beta +v_{2i} \sin\beta $ and $v_{2i} \cos\beta -v_{2i-1} \sin\beta $, respectively,
does not change the matrix $B$ (this corresponds to rotating the basis inside the plane
spanned by $v_{2i-1}$ and $v_{2i}$),
we can assume that the vectors $v_2,v_4,\ldots,v_{2k}$ are orthogonal to the vector $\jj$.
Set $\alpha_i=\cos^{-1}\left\langle v_{2i-1}\mid n^{-1/2}\jj\right\rangle$ for $i\in [k]$, and
additionally set $\alpha_{k+1}=\cos^{-1}\left\langle v_{2k+1}\mid n^{-1/2}\jj\right\rangle$ if $n$ is odd. 

We next examine $\Tr A^3$ and $\Tr A^4$ in terms of $\JJ$ and $B$.
We start with the trace of $A^3$:
\[ 8 \Tr A^3 = \Tr (\JJ-B)^3 = \Tr\JJ^3 - 3\Tr\JJ^2B + 3\Tr\JJ B^2 - \Tr B^3\,.\]
Since both $B$ and $B^3$ are skew-symmetric, it follows that $\Tr\JJ^2 B=0$ and $\Tr B^3=0$.
We next analyze the term $\Tr\JJ B^2$. Since $v_1,\dots,v_n$ are mutually orthogonal and $v_2,v_4,\dots,v_{2k}$ are orthogonal to $\vec{j}$, we have
\[ \Tr\JJ B^2 = \frac{1}{n} \Tr\JJ^2 B^2=\frac{1}{n} \Tr\JJ B^2\JJ = 
\frac{1}{n}\Tr\JJ(ULU^T)^2\JJ = \frac{1}{n}\Tr\JJ U L^2U^T \JJ\]
\[=-n^2\sum_{i=1}^k\lambda_i^2\left\langle v_{2i-1} \mid \jj\right\rangle^2
=-n^3\sum_{i=1}^k\lambda_i^2\cos^2\alpha_i\,.\]
Hence, we obtain that
\begin{equation}
8\sigma_3(A)=1-3\sum_{i=1}^k\lambda_i^2\cos^2\alpha_i.
\label{eq-sigma3}
\end{equation}
Similarly, we can express the trace of $A^4$ as follows:
\[ 16 \Tr A^4= \Tr\JJ^4 - 4\Tr\JJ^3B + 4\Tr \JJ^2 B^2 +2\Tr \JJ B\JJ B-4\Tr\JJ B^3+\Tr B^4\,.\]
Since $B$ and $B^3$ are skew-symmetric, it follows that $\Tr\JJ^3B=0$, $\Tr \JJ B\JJ B=0$ and $\Tr\JJ B^3=0$.
Also, $\Tr B^4=2n^4\sum_{i=1}^k\lambda_i^4$ by the cyclic property.
Consequently, we get that
\begin{equation}
16\sigma_4(A)=1-4\sum_{i=1}^k\lambda_i^2\cos^2\alpha_i+2\sum_{i=1}^k\lambda_i^4.
\label{eq-sigma4}
\end{equation}
Recall that, if $\sigma_3(A)\in [1/32,1/8]$, then $g(\sigma_3(A))=\frac{1}{16}(z^4 + (1-z)^4)$ where $z\in[1/2,1]$ 
such that $\sigma_3(A)=\frac{1}{8}(z^3+(1-z)^3)$. 
So, for $\sigma_3(A)$ in the considered range, we have \[8\sigma_3(A)=z^3+(1-z)^3=1-3(z-z^2)\,.\]
By comparing this equation to \eqref{eq-sigma3}, it must be the case that
\begin{equation}
z-z^2=\sum_{i=1}^k\lambda_i^2\cos^2\alpha_i \,.
\label{eq-sigmaz}
\end{equation}
It follows that
\begin{align*}
16g(\sigma_3(A)) & =  z^4 + (1-z)^4=1-4z+6z^2-4z^3+2z^4 \\
                 & =  1-4(z-z^2)+2(z-z^2)^2 \\
		 & =  1-4\sum_{i=1}^k\lambda_i^2\cos^2\alpha_i+2\left(\sum_{i=1}^k\lambda_i^2\cos^2\alpha_i\right)^2\,.
\end{align*}
Combining this with \eqref{eq-sigma4}, we see that
the inequality $\sigma_4(A)\ge g(\sigma_3(A))$ holds if and only if
\begin{equation}
\sum_{i=1}^k\lambda_i^4\ge\left(\sum_{i=1}^k\lambda_i^2\cos^2\alpha_i\right)^2 \,,
\label{eq-sigma34}
\end{equation}
and $\sigma_4(A)=g(\sigma_3(A))$ if and only if \eqref{eq-sigma34} holds with equality.

Since $v_1,\ldots,v_n$ form an orthonormal basis of $\RR^n$, 
\[\sum_{i=1}^n\left\langle v_i\mid n^{-1/2}\jj\right\rangle^2
=\left\langle n^{-1/2}\jj\mid n^{-1/2}\jj\right\rangle^2= 1.\]
Thus, $\sum_{i=1}^k\cos^2\alpha_i= 1$ if $n$ is even and $\sum_{i=1}^{k+1}\cos^2\alpha_i =1$ otherwise.
In either case, $\sum_{i=1}^k\cos^4\alpha_i\le 1$ and the equality holds if and only if
exactly one of the values of $\alpha_1,\ldots,\alpha_k$ is equal to zero and the remainder are equal to $\pi/2$.
Since the Cauchy--Schwarz inequality implies that
\begin{equation}
\left(\sum_{i=1}^k\lambda_i^2\cos^2\alpha_i\right)^2\le
\left(\sum_{i=1}^k\lambda_i^4\right)\cdot\left(\sum_{i=1}^k\cos^4\alpha_i\right)\,,
\label{eq-sigmaCS}
\end{equation}
we obtain that the inequality \eqref{eq-sigma34} indeed holds.

Now, assume that the inequality \eqref{eq-sigma34} holds with equality.
As we have seen, this can only occur if exactly one of the $\alpha_i$ are 
zero and the rest are $\pi/2$.
By symmetry, we can assume that $\alpha_1=0$ and $\alpha_i=\pi/2$ for $i>1$. It follows that $\lambda_2=\cdots=\lambda_k=0$, and $v_{1}=n^{-1/2}\jj$.
Hence, as $B=ULU^T$, the entry $B_{i,j}$ is equal to $\lambda_1 n^{1/2}(v_{2,j}-v_{2,i})$ 
for all $i,j\in [n]$. It follows that, for $\sigma_3(A)\in [1/32,1/8]$, if 
$\sigma_4(A)=g(\sigma_3(A))$, then $A_{i,j}=1/2+z_i-z_j$ where $z_i=\lambda_1 n^{1/2}v_{2,i}/2$.
Conversely, any matrix of this form satisfies \eqref{eq-sigma34} with equality and therefore
satisfies $\sigma_4(A)=g(\sigma_3(A))$.
\end{proof}

Reinterpreting Theorem~\ref{thm-reg2} in the language of tournament limits, we obtain the following corollary.
\begin{corollary}
\label{cor-reg2}
Let $W$ be a tournament limit.
If $t(C_3,W)\ge 1/32$, then $t(C_4,W)\ge g(t(C_3,W))$ and
the equality holds if and only if
there exists a measurable function $f:[0,1]\to [0,1/2]$ such that
$W(x,y)=1/2+f(x)-f(y)$ for almost every $(x,y)\in [0,1]^2$.
\end{corollary}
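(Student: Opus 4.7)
The plan is to deduce the corollary from Theorem~\ref{thm-reg2}. The inequality and the ``if'' half of the equality case reduce to standard approximation arguments, while the ``only if'' half requires lifting the spectral argument of Theorem~\ref{thm-reg2} to the continuous setting, which is the main obstacle.

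For the lower bound, I would let $A_k$ be the tournament matrix associated with the refining sequence of weak $1/k$-regular partitions of $W$ introduced before Proposition~\ref{prop-WT3}, so that $\sigma_\ell(A_k)\to t(C_\ell,W)$ for every $\ell\ge 3$. Proposition~\ref{prop-WT3} gives $\sigma_3(A_k)=t(C_3,W_k')\ge t(C_3,W)\ge 1/32$, so Theorem~\ref{thm-reg2} yields $\sigma_4(A_k)\ge g(\sigma_3(A_k))$. A routine check at the breakpoints $z=1/m$, $m\in\NN$, shows that $g$ is continuous on $[0,1/8]$, so passing $k\to\infty$ gives $t(C_4,W)\ge g(t(C_3,W))$. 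For the ``if'' direction of the equality case, given $W(x,y)=1/2+f(x)-f(y)$ with $f:[0,1]\to[0,1/2]$ measurable, I would approximate $f$ in $L^1$ by step functions $f_k:[0,1]\to[0,1/2]$. Then $W_k(x,y):=1/2+f_k(x)-f_k(y)$ is a step tournament limit whose associated tournament matrix has the extremal form $1/2+z_i-z_j$ of Theorem~\ref{thm-reg2} and therefore satisfies $\sigma_4=g(\sigma_3)$; since $W_k\to W$ in $L^1$, dominated convergence and continuity of $g$ pass the equality to the limit.

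For the ``only if'' direction, approximation alone is insufficient: the approximants satisfy only $\sigma_4(A_k)-g(\sigma_3(A_k))\to 0$, not exact equality, so without a stability version of Theorem~\ref{thm-reg2} the matrix characterization does not transfer directly. I would instead repeat the spectral argument of Theorem~\ref{thm-reg2} intrinsically on $L^2([0,1])$. Denote by $T_W$ and $T_B$ the integral operators with kernels $W(x,y)$ and $B(x,y):=1-2W(x,y)$, and by $T_\JJ$ the operator with constant kernel~$1$, writing $\jj$ also for the constant function~$1$ on $[0,1]$. Since $B$ is bounded and skew-symmetric, $T_B$ is Hilbert--Schmidt and skew-adjoint; hence it admits a discrete spectral decomposition into invariant $2$-planes: there exist $\mu_n\in\RR$ and an orthonormal system $(u_n,v_n)_{n\ge 1}$ with $T_Bu_n=\mu_nv_n$ and $T_Bv_n=-\mu_nu_n$. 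Rotating inside each plane, I may further assume $\langle v_n,\jj\rangle=0$. Setting $\alpha_n=\cos^{-1}\langle u_n,\jj\rangle$, the identities $t(C_\ell,W)=\Tr T_W^\ell$ together with $T_W=(T_\JJ-T_B)/2$ and $T_\JJ^2=T_\JJ$ give, after the same cancellations of terms of odd degree in $B$ as in the proof of Theorem~\ref{thm-reg2}, the continuous analogues
\begin{align*}
8\,t(C_3,W) &= 1-3\sum_{n\ge 1}\mu_n^2\cos^2\alpha_n,\\
16\,t(C_4,W) &= 1-4\sum_{n\ge 1}\mu_n^2\cos^2\alpha_n+2\sum_{n\ge 1}\mu_n^4.
\end{align*}
The Cauchy--Schwarz chain $(\sum\mu_n^2\cos^2\alpha_n)^2\le(\sum\mu_n^4)(\sum\cos^4\alpha_n)\le\sum\mu_n^4$ together with $\sum_n\cos^2\alpha_n\le\|\jj\|^2=1$ then forces, under $t(C_4,W)=g(t(C_3,W))$, exactly one $\cos\alpha_n$ (say $\cos\alpha_1$) to equal $1$, all others to vanish, and $\mu_n=0$ for $n\ge 2$. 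Thus $T_B$ is rank two and $B(x,y)=\mu_1(v_1(x)-v_1(y))$ a.e., so $W(x,y)=1/2+f(x)-f(y)$ a.e.\ with $f(x):=c-\mu_1 v_1(x)/2$, where $c$ is chosen so that $f$ takes values in $[0,1/2]$ (possible because $W\in[0,1]$ forces the essential oscillation of $f$ to be at most $1/2$). The delicate point is verifying the spectral decomposition of $T_B$, justifying the cancellation of the odd-in-$B$ traces in the infinite-dimensional setting, and confirming convergence of the series $\sum\mu_n^2$ and $\sum\mu_n^4$; all of this should follow from standard Hilbert--Schmidt operator theory.
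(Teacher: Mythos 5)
Your proposal is correct. For the inequality your argument is exactly the paper's: weak regular approximations, Proposition~\ref{prop-WT3} to keep the $3$-cycle density above $1/32$, Theorem~\ref{thm-reg2} for each approximating matrix, and $L_1$-convergence plus continuity of $g$. For the equality characterization, however, you take a genuinely different route. The paper stays finite-dimensional: since the matrices $A_k$ are only \emph{asymptotically} extremal, it extracts from the proof of Theorem~\ref{thm-reg2} the stability information \eqref{eq-reg2-lim} (that $\alpha_{k,1}\to 0$ and $\lambda_{k,1}$ tends to the extremal value), builds step functions $f_k$ from the spectral data of each $A_k$, and identifies their limit via the martingale of conditional averages $\ff_k$ and Doob's Martingale Convergence Theorem, before shifting the limit into $[0,1/2]$. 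You instead rerun the whole spectral argument intrinsically on $W$: the kernel $B=1-2W$ gives a Hilbert--Schmidt skew-adjoint operator, which decomposes into invariant $2$-planes; Bessel's inequality replaces the exact identity $\sum_i\cos^2\alpha_i=1$ by $\sum_n\cos^2\alpha_n\le 1$, which is all the Cauchy--Schwarz chain needs; the expansions of $8\,t(C_3,W)$ and $16\,t(C_4,W)$ go through via Lidskii's theorem and the trace formula for products of Hilbert--Schmidt operators; and exact equality then forces the kernel to be rank two, $B(x,y)=\mu_1(v_1(x)-v_1(y))$, from which $f$ is read off directly. Your route buys exact (not approximate) equality conditions and avoids both the stability analysis and the martingale step, at the price of importing compact-operator spectral theory; the paper's route avoids infinite-dimensional arguments but needs the implicit quantitative version of the equality analysis plus Doob's theorem --- and you correctly identified that a naive transfer of the matrix characterization fails precisely because the approximants satisfy only approximate equality. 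Two small points to tidy in a full write-up: handle the degenerate case $T_B=0$ (then $W=1/2$ a.e.\ and a constant $f$ works), and in the ``if'' direction either compute $t(C_3,W)$ and $t(C_4,W)$ directly for $W(x,y)=1/2+f(x)-f(y)$ or note that when $t(C_3,W)=1/32$ the step approximants may have $3$-cycle density slightly below $1/32$, so the equality of Theorem~\ref{thm-reg2} should be invoked with a limiting argument rather than verbatim.
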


\begin{proof}
Let $(W_k)_{k\in\NN}$ be a sequence of refining weak $1/k$-regular approximations of $W$ and let
$A_k$, $k\in\NN$, be the corresponding tournament matrices.
Since $t(C_3,W)\ge 1/32$, 
it holds that $\sigma_3(A_k)=t(C_3,W_k)\ge 1/32$ by Proposition~\ref{prop-WT3}.
Thus, by Theorem~\ref{thm-reg2},
we have that 
\[t(C_4,W_k)=\sigma_4(A_k)\geq g\left(\sigma_3(A_k)\right)=g(t(C_3,W_k))\, .\]
and so $t(C_4,W)\geq g(t(C_3,W))$ by the fact that $(W_k)_{k\in\NN}$ converges to $W$ in $L_1$. 

To prove the structure of $W$ in the case of equality, assume that $t(C_4,W)=g(t(C_3,W))$. Let $n_k$ be the order of $A_k$ for $k\in\NN$, and
let $\lambda_{k,1},\ldots,\lambda_{k,\lfloor n_k/2\rfloor}$, $\alpha_{k,1},\ldots,\alpha_{k,\lfloor n_k/2\rfloor}$ and $B_k$
be defined as in the proof of Theorem~\ref{thm-reg2} 
(we may assume that $n_k$ is even and so $\alpha_{k,\lfloor n_k/2\rfloor+1}$ is not defined). The analysis of the case of equality in the proof of Theorem~\ref{thm-reg2} implies that
\begin{equation}
  \lim_{k\to\infty}\lambda_{k,1}=\sqrt{\frac{1-8t(C_3,W)}{3}}
  \qquad\mbox{and}\qquad
  \lim_{k\to\infty}\alpha_{k,1}=0\,.
  \label{eq-reg2-lim}
\end{equation}
Let $\{Z_{k,1},\ldots,Z_{k,n_k}\}$ be the partition of the interval $[0,1]$ corresponding to $W_k$, and let $w_k=(w_{k,1},\ldots, w_{k,n_k})$ be the vector $v_2$ as defined in the proof of Theorem~\ref{thm-reg2}. Define a function $f_k:[0,1]\to\RR$ by setting
\[f_k(x)=\frac{\lambda_{k,1}n_k^{1/2}}{2}\left(w_{k,i}-\frac{\sum_{i'=1}^{n_k}w_{k,i'}}{n_k}\right)\]
where $i\in[n_k]$ such that $x\in Z_{k,i}$.
It follows from \eqref{eq-reg2-lim} that
\begin{equation}
\lim_{k\to\infty}\int_{x,y\in [0,1]}
		 \left|W_k(x,y)-\left(1/2+f_k(x)-f_k(y)\right)\right|\dd x\dd y
		 =0\,.
\label{eq-reg2-L1}
\end{equation}
Also observe that the definition of $f_k$ implies that
\begin{equation}
\int_{x\in [0,1]}f_k(x)=0\,.
\label{eq-reg2-zero1}
\end{equation}
We next define functions $\ff_k:[0,1]\to\RR$ by setting
\[\ff_k(x)=\frac{1}{|Z_{k,i}|}\int_{(x',y)\in Z_{k,i}\times [0,1]}W(x',y)\dd x'\dd y-\frac{1}{2}\]
for $x\in Z_{k,i}$, $i\in [n_k]$.
Note that
\begin{equation}
\ff_k(x)=\int_{y\in [0,1]} W_k(x,y)\dd y-\frac{1}{2}\,.
\label{eq-reg2-Wk}
\end{equation}
In particular, $\ff_k(x)\in [-1/2,1/2]$ for all $x\in [0,1]$.
Observe that the just defined functions satisfy that
\[\int_{x\in Z_{k,i}}\ff_{k'}(x)\dd x=\int_{x\in Z_{k,i}}\ff_k(x)\dd x\]
for every $k\in\NN$, $i\in [n_k]$ and $k'\ge k$.
In particular, it holds that
the sequence $(\ff_k)_{k\in\NN}$ forms a martingale when viewed as a sequence of random variables on $[0,1]$.
So, Doob's Martingale Convergence Theorem yields that
the sequence $(\ff_k)_{k\in\NN}$ $L_1$-converges to a function $\ff:[0,1]\to [-1/2,1/2]$.

We derive by applying the $L_1$-convergence of $(\ff_k)_{k\in\NN}$,
\eqref{eq-reg2-Wk}, \eqref{eq-reg2-zero1}, the triangle inequality and \eqref{eq-reg2-L1} (in this order) that
\[\lim_{k\to\infty}\int_{x\in [0,1]}\left|\ff(x)-f_k(x)\right|\dd x=
  \lim_{k\to\infty}\int_{x\in [0,1]}\left|\ff_k(x)-f_k(x)\right|\dd x\]
\[=\lim_{k\to\infty}\int_{x\in [0,1]}\left|\int_{y\in [0,1]} W_k(x,y)\dd y-1/2-f_k(x)\right|\dd x\]
\[=\lim_{k\to\infty}\int_{x\in [0,1]}\left|\int_{y\in [0,1]} W_k(x,y)-\left(1/2+f_k(x)-f_k(y)\right)\dd y\right|\dd x\]
\[\le\lim_{k\to\infty}\int_{x,y\in [0,1]}\left|W_k(x,y)-\left(1/2+f_k(x)-f_k(y)\right)\right|\dd x\dd y=0\,.\]
This implies that the sequence $(f_k)_{k\in\NN}$ also $L_1$-converges to the function $\ff$.
It follows from \eqref{eq-reg2-L1} and the $L_1$-convergence of $W_k$ to $W$
that $W(x,y)$ is equal to $1/2+\ff(x)-\ff(y)$ for almost every $(x,y)\in [0,1]^2$.

It remains to shift $\ff$ so that its range lies in $[0,1/2]$. Let $z_0$ be the infimum of those values $z$ such that the measure of $\ff^{-1}\left((-\infty,z]\right)$ is positive, and
define a function $f:[0,1]\to [0,1/2]$ as follows:
\[
f(x)=\begin{cases}
     \ff(x)-z_0 & \mbox{if $\ff(x)\in [z_0,z_0+1/2]$, and} \\
     0 & \mbox{otherwise.}
     \end{cases}
\]
Since $W(x,y)=1/2+\ff(x)-\ff(y)$ for almost every $(x,y)\in [0,1]^2$ and
$W(x,y)\in [0,1]$ for all $(x,y)\in [0,1]^2$,
the set of $x\in [0,1]$ such that the second case in the definition of $f(x)$ applies has measure zero.
It follows that $W(x,y)=1/2+f(x)-f(y)$ for almost every $(x,y)\in [0,1]^2$ as desired.
\end{proof}

\section{Regime of three parts}
\label{sec-reg23}
Having confirmed Conjecture~\ref{conj-main} in the regime of two parts, we now turn towards the next case, namely $1/72 \leq \sigma_3(A)\leq 1/32$ (Theorem~\ref{thm-reg23}). Indeed, the proof of Theorem~\ref{thm-reg23} will apply to both regimes, although it does not characterise the extremal tournaments.

We start with analyzing the following optimization problem, which we refer to as the problem \spectrum{}.
This optimization problem is obtained from constraints that (normalized) eigenvalues of
a non-negative matrix of order $n$ with trace $n/2$ must satisfy.
We state this formally in Lemma~\ref{lm-problem}, which follows the statement of the problem.

\begin{center}
\begin{tabular}{ll}
& {\bf Optimization problem \spectrum{}} \\
\hline
Parameters: & reals $s_3\in [0,1/8]$ and $\rho\in [0,1/2]$\\
            & non-negative integers $k$ and $\ell$ such that $k+\ell\ge 1$ \\
Variables: & real numbers $r_1,\ldots,r_{k}$, $a_1,\ldots,a_{\ell}$ and $b_1,\ldots,b_{\ell}$ \\
Constraints: & $0\le r_1,\ldots,r_{k}\le\rho$\\
             & $0\le a_1,\ldots,a_{\ell}$\\
	     & $\rho+\sum\limits_{i=1}^{k}r_i+2\sum\limits_{i=1}^{\ell}a_i=1/2$\\
	     & $\rho^3+\sum\limits_{i=1}^{k}r_i^3+2\sum\limits_{i=1}^{\ell}\left(a_i^3-3a_ib_i^2\right)=s_3$\\
Objective: & minimize $\rho^4+\sum\limits_{i=1}^{k}r_i^4+2\sum\limits_{i=1}^{\ell}\left(a_i^4-6a_i^2b_i^2+b_i^4\right)$
\end{tabular}
\end{center}

\begin{lemma}
\label{lm-problem}
Let $A$ be a tournament matrix of order $n$ with spectral radius equal to $\rho\cdot n$.
Let $k$ be one less than the number of real eigenvalues of $A$ (counting multiplicities) and
$\ell$ the number of conjugate pairs of complex eigenvalues (again counting multiplicities).
Further, let $\rho\cdot n,r_1\cdot n,\ldots,r_{k}\cdot n$ be the $k+1$ real eigenvalues and
$(a_1\pm\unit b_1)n,\ldots,(a_{\ell}\pm\unit b_{\ell})n$ be the $\ell$ pairs of complex eigenvalues.
Then the numbers $r_1,\ldots,r_{k}$, $a_1,\ldots,a_{\ell}$ and $b_1,\ldots,b_{\ell}$
satisfy all constraints in the optimization problem \spectrum{}
for the parameters $s_3=\sigma_3(A)$, $\rho$, $k$ and $\ell$.
\end{lemma}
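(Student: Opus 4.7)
The plan is to read each constraint in the optimization problem \spectrum{} as a condition on the (normalized) eigenvalues of $A$ and to verify it directly. Writing the eigenvalues of $A$ as $\rho n, r_1 n, \ldots, r_k n, (a_1 \pm \unit b_1) n, \ldots, (a_\ell \pm \unit b_\ell) n$, the constraints come respectively from (a)~Lemma~\ref{lm-positive}, (b)~the definition of spectral radius, (c)~the trace of $A$, and (d)~the trace of $A^3$. None of these steps should be difficult; the lemma is essentially a bookkeeping statement recording what ``spectrum of a tournament matrix'' entails.

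First, I would dispose of the non-negativity constraints. By Lemma~\ref{lm-positive}, every eigenvalue of $A$ has non-negative real part, which instantly gives $r_i \geq 0$ for $i \in [k]$ (since real eigenvalues equal their own real parts) and $a_j \geq 0$ for $j \in [\ell]$. For the upper bound $r_i \le \rho$, note that the spectral radius is by definition the maximum modulus of any eigenvalue, so $|r_i n| \le \rho n$, which combined with $r_i \ge 0$ yields $r_i \le \rho$.

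Next I would verify the two moment identities. For the linear constraint, observe that $\Tr A = n/2$ because the diagonal entries of a tournament matrix are all $1/2$. On the other hand, $\Tr A$ equals the sum of eigenvalues, and a complex conjugate pair $(a_j \pm \unit b_j)n$ contributes $2 a_j n$ to that sum; dividing by $n$ gives $\rho + \sum_i r_i + 2 \sum_j a_j = 1/2$. For the cubic constraint, Proposition~\ref{prop-trace} together with the definition of $\sigma_\ell$ gives $\sigma_3(A) = \frac{1}{n^3} \Tr A^3$, and $\Tr A^3$ is the sum of the cubes of the eigenvalues. A direct expansion yields
\begin{equation*}
(a_j + \unit b_j)^3 + (a_j - \unit b_j)^3 = 2\bigl(a_j^3 - 3 a_j b_j^2\bigr),
\end{equation*}
so the contributions of the complex pairs match the expression in the problem, and dividing $\Tr A^3$ by $n^3$ yields exactly the stated cubic constraint with $s_3 = \sigma_3(A)$.

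I do not expect any real obstacle in this lemma itself; every step is a one-line invocation of an already-established fact. The point of the lemma is to set up the relaxation which will do the actual work: any lower bound on the objective $\rho^4 + \sum_i r_i^4 + 2\sum_j (a_j^4 - 6 a_j^2 b_j^2 + b_j^4)$ subject to the feasibility constraints of \spectrum{} becomes, via the same expansion for fourth powers, a lower bound on $\sigma_4(A)$. The genuine difficulty will appear afterwards, when one has to analyse the optimization problem \spectrum{} itself and show that its minimum, for the relevant range of $s_3$ and $\rho$, is at least $g(s_3)$.
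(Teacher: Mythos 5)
Your proposal is correct and follows essentially the same route as the paper: non-negativity from Lemma~\ref{lm-positive}, the bound $r_i\le\rho$ from the definition of the spectral radius, and the two equality constraints from $\Tr A = n/2$ and $\Tr A^3 = \sigma_3(A)n^3$ via the expansion of conjugate-pair contributions. The only detail the paper makes explicit that you leave implicit is the appeal to the Perron--Frobenius theorem (for the non-negative matrix $A$) to guarantee that the spectral radius $\rho\cdot n$ is itself a real eigenvalue, as the statement's labelling of the eigenvalues presupposes.
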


\begin{proof}
Since $\rho\cdot n$ is the spectral radius of $A$,
it holds that $r_i\le\rho$ for every $i\in [k]$ and that
$\rho\cdot n$ is an eigenvalue of $A$ by the Perron--Frobenius theorem.
Since the real part of every eigenvalue of $A$ is non-negative by Lemma~\ref{lm-positive},
all $r_1,\ldots,r_{k}$ and $a_1,\ldots,a_{\ell}$ are non-negative.
Since the diagonal entries of $A$ are all $1/2$ and trace of $A$ is equal to
the sum of its eigenvalues, we have
\[\rho n+\sum\limits_{i=1}^{k}r_i n+2\sum\limits_{i=1}^{\ell}a_i n = n/2.\]
Similarly, the trace of $A^3$ gives us
\[s_3n^3=\sigma_3(A)n^3 = \rho^3n^3+\sum\limits_{i=1}^{k}r_i^3n^3+2\sum\limits_{i=1}^{\ell}\left(a_i^3-3a_ib_i^2\right)n^3.\]
Thus, we conclude that the numbers $r_1,\ldots,r_{k}$, $a_1,\ldots,a_{\ell}$ and $b_1,\ldots,b_{\ell}$
satisfy all constraints in the optimization problem \spectrum{}.
\end{proof}

Note that the objective function of \spectrum{} is precisely $\sigma_4(A)$. 
Next, we analyze the structure of optimal solutions of the optimization problem \spectrum{}.

\begin{lemma}
\label{lm-lagrange}
Let $r_1,\ldots,r_{k}$, $a_1,\ldots,a_{\ell}$ and $b_1,\ldots,b_{\ell}$ be an optimal solution
of the optimization problem \spectrum{} with the parameters $s_3$, $\rho$, $k$ and $\ell$.
Then, at least one of the following two cases must hold:
\begin{itemize}
\item There exist positive reals $r'$ and $r''$ such that $r_1,\ldots,r_{k}\in\{0,r',r'',\rho\}$,
      $(a_1,b_1),\ldots,(a_{\ell},b_{\ell})\in\{(0,0),(r',0),(r'',0)\}$.
\item There exist reals $a'$ and $b'\neq 0$ such that $r_1,\ldots,r_{k}\in\{0,\rho\}$ and
      $(a_1,b_1),\ldots$, $(a_{\ell},b_{\ell})\in\{(0,0),(a',b'),(a',-b')\}$.
\end{itemize}
\end{lemma}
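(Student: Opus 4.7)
The method of Lagrange multipliers is the natural tool here, treating $r_i$, $a_i$, and $b_i$ as variables (with $\rho$, $s_3$, $k$, $\ell$ fixed) and introducing multipliers $\lambda$ and $\mu$ for the linear and cubic constraints, respectively. Writing out stationarity of the Lagrangian, an interior $r_i\in(0,\rho)$ is forced to satisfy
\[4 r_i^3 - 3\mu r_i^2 - \lambda = 0\,,\]
and the same equation governs each $a_i>0$ with $b_i=0$, since its $b_i$-derivative vanishes automatically in that case. When $b_i\neq 0$, the $b_i$-stationarity gives $b_i^2 = 3a_i^2 - \tfrac{3}{2}\mu a_i$, and substituting this back into the $a_i$-stationarity collapses (after a short algebraic manipulation) to the particularly clean form
\[\lambda = -\tfrac{a_i}{2}\bigl(8a_i - 3\mu\bigr)^2\,.\]

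Viewing the first equation as a cubic in $x$, its coefficient of $x$ vanishes, so Vieta's formulas force the pairwise products of its roots to sum to zero; this rules out three positive roots and leaves at most two positive roots $r'$ and $r''$. For the second equation, the requirement $b_i^2\geq 0$ forces $a_i>\max(\mu/2,0)$, and differentiating the map $a\mapsto -\tfrac{a}{2}(8a-3\mu)^2$ shows it is strictly decreasing on this domain. Hence there is at most one valid positive $a_i$, call it $a'$, with a corresponding $b'\neq 0$ determined up to sign.

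If every $b_i=0$, Case 1 of the lemma is immediate, with the $r', r''$ above. Otherwise some $(a_i,b_i)$ has $b_i\neq 0$, which forces $a_i=a'$ and $b_i=\pm b'$; it remains only to show that in this situation no $r_i$ can be interior and no $(a_i,0)$ can have $a_i>0$. Either possibility would amount to a positive root of the real cubic at the value of $\lambda$ pinned down by the complex relation. Using $a'>\mu/2$ one gets $8a'-3\mu>\mu$ when $\mu>0$, hence $\lambda<-\mu^3/4$; the case $\mu\leq 0$ similarly gives $\lambda<0$. On the other hand, the minimum of $4x^3-3\mu x^2$ on $(0,\infty)$ is $-\mu^3/4$ attained at $x=\mu/2$ when $\mu>0$ and the function is positive when $\mu\leq 0$, so the forced value of $\lambda$ lies strictly below the range for which $4x^3-3\mu x^2=\lambda$ has a positive solution. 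This rules out any interior real value and places the optimum in Case 2.

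The main obstacle is this last step: the Lagrangian machinery by itself only constrains the variables to lie on the root sets of the two cubics, so excluding a \emph{mixed} configuration — a complex pair coexisting with an interior real eigenvalue — has to be extracted by the careful magnitude comparison of $\lambda$ described above. The remaining work (deriving stationarity, spotting the factorization of the complex relation, and counting positive roots of each cubic) is essentially routine once the framework is set up.
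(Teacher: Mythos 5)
Your argument is correct, and while it lives in the same general framework as the paper's proof --- first-order (Lagrange) conditions for the optimization problem \spectrum{} --- the execution is genuinely different. The paper never introduces explicit multipliers: it only uses that the matrix formed by the three restricted gradients has rank at most two, and it excludes bad configurations by exhibiting $3\times 3$ submatrices with nonzero determinant (for instance the factorization $-(2a_1r_i+a_1^2+b_1^2)\left((r_i-a_1)^2+b_1^2\right)$ rules out an interior real value coexisting with a complex pair, and a Vandermonde-type factorization rules out three distinct positive real values). You instead solve the stationarity system explicitly: every interior real value is a positive root of $4x^3-3\mu x^2-\lambda=0$, whose vanishing linear coefficient gives at most two positive roots by Vieta (this is essentially the paper's factor $\alpha\beta+\beta\gamma+\gamma\alpha$ in disguise); a pair with $b_i\neq 0$ forces $b_i^2=3a_i^2-\tfrac32\mu a_i$ and $\lambda=-\tfrac{a_i}{2}(8a_i-3\mu)^2$, and strict monotonicity of $a\mapsto-\tfrac{a}{2}(8a-3\mu)^2$ on $a>\max(\mu/2,0)$ pins down a unique $a'$ (hence $b_i=\pm b'$); and your exclusion of mixed configurations by comparing the forced value $\lambda<-\mu^3/4$ (resp.\ $\lambda<0$) with the minimum of $4x^3-3\mu x^2$ on $(0,\infty)$ replaces the paper's determinant computation and is arguably more transparent. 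I checked the algebra and it is right, including that $b_i\neq 0$ forces $a_i>0$ via the $b_i$-stationarity (state this before invoking $a_i$-stationarity, as the order matters slightly). One caveat worth a sentence in a final write-up: the explicit-multiplier form tacitly assumes the normal (KKT) case, whereas the paper's rank-at-most-two formulation is exactly the Fritz John condition and needs no constraint qualification; in the abnormal case the two restricted constraint gradients are proportional, which forces all $b_i=0$ and all interior values to coincide, so the first case of the lemma holds trivially --- a two-line patch.
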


\begin{proof}
The method of Lagrange multipliers implies that 
the gradient of the objective function is a linear combination of
the gradient of the two equality constraints when restricted to the entries indexed
by $r_i\notin\{0,\rho\}$, by $a_i\neq 0$ and $b_i\neq 0$, i.e., when we are not on the boundary of the feasible set.
In particular,
the rank of the matrix $M$ with rows being the described restrictions of the three gradient vectors is at most two.

We first analyze the case that one of the numbers $b_1,\ldots,b_{\ell}$ is non-zero.
Our aim is to show that the second case described in the statement of the lemma applies.
By symmetry, we can assume that $b_1\neq 0$.
Also note the following holds for every $i\in [\ell]$: if $b_i\not=0$, then $a_i\not=0$.
Indeed, if $a_i=0$ and $b_i\neq 0$,
then setting $b_i=0$ does not affect the constraints and decreases the objective function,
which contradicts that the solution is optimal.
It follows that $a_1$ is positive.

Suppose that there exist $r_i$ such that $0<r_i<\rho$.
The restriction of the matrix $M$ to the columns corresponding to $a_1$, $b_1$ and $r_i$ is the following.
\begin{equation}
  \begin{bmatrix}
  2 & 0 & 1 \\
  6a_1^2-6b_1^2 & -12a_1b_1 & 3r_i^2 \\
  8a_1^3-24a_1b_1^2 & 8b_1^3-24a_1^2b_1 & 4r_i^3
  \end{bmatrix}
  \label{eq-lagrange}
\end{equation}
Dividing the first column by $2$ and the second by $4b_1$,
dividing the second row by $3$ and the third by $2$,
and subtracting the last column from the first one yields
the following matrix, which has the same rank.
\[\begin{bmatrix}
  0 & 0 & 1 \\
  a_1^2-b_1^2-r_i^2 & -a_1 & r_i^2 \\
  2(a_1^3-3a_1b_1^2-r_i^3) & b_1^2-3a_1^2 & 2r_i^3
  \end{bmatrix}\]
This matrix is not full rank if and only if the determinant of its submatrix formed by the intersection of the
second and third rows with the first and second columns, which is equal to
\[-(a_1^2+b_1^2)^2+(3a_1^2-b_1^2)r_i^2-2a_1r_i^3\,,\]
is zero.
However, this determinant can be rewritten as
\[-(2a_1r_i+a_1^2+b_1^2)\left((r_i-a_1)^2+b_1^2\right)\,,\]
which is negative since $a_1>0$ and $b_1\neq0$.
It follows that $r_i\in\{0,\rho\}$ for all $i\in [k]$.

Further suppose that there exists an index $i\in[\ell]$ for which it holds $(a_i,b_i)\notin\{(0,0),(a_1,b_1),(a_1,-b_1)\}$.
If $b_i=0$, then the restriction of the matrix $M$ to the columns corresponding to $a_1$, $b_1$ and $a_i$
is the same as the restriction of the matrix $M$ considered in the previous paragraph with $r_i$ replaced by $a_i$ and
the corresponding column multiplied by two. In particular, the restriction cannot have rank two in this case.
Hence, we can assume that $b_i\neq 0$ and so $a_i>0$ (the argument is the same as when we argued that $a_1>0$).
The restriction of the matrix $M$ to the columns corresponding to $a_1$, $b_1$, $a_i$ and $b_i$ is the following.
\[\begin{bmatrix}
  2 & 0 & 2 & 0 \\
  6a_1^2-6b_1^2 & -12a_1b_1 & 6a_i^2-6b_i^2 & -12a_ib_i \\
  8a_1^3-24a_1b_1^2 & 8b_1^3-24a_1^2b_1 & 8a_i^3-24a_ib_i^2 & 8b_i^3-24a_i^2b_i
  \end{bmatrix}\]
The rank of this matrix is the same as the rank of the following matrix (the rows
are multiplied by $1/2$, $1/6$ and $1/4$,
the columns by $1$, $-a_1/2b_1$, $1$ and $-a_i/2b_i$, respectively).
\[\begin{bmatrix}
  1 & 0 & 1 & 0 \\
  a_1^2-b_1^2 & a_1^2 & a_i^2-b_i^2 & a_i^2 \\
  2a_1^3-6a_1b_1^2 & 3a_1^3-a_1b_1^2 & 2a_i^3-6a_ib_i^2 & 3a_i^3-a_ib_i^2
  \end{bmatrix}\]
By subtracting twice the second column from the first column and
twice the fourth column from the third column,
we obtain the following matrix.
\[\begin{bmatrix}
  1 & 0 & 1 & 0 \\
  -(a_1^2+b_1^2) & a_1^2 & -(a_i^2+b_i^2) & a_i^2 \\
  -4a_1(a_1^2+b_1^2) & 3a_1^3-a_1b_1^2 & -4a_i(a_i^2+b_i^2) & 3a_i^3-a_ib_i^2
  \end{bmatrix}\]
Since the last row of the matrix is a linear combination of the previous two rows (the operation that
we have performed so far has preserved this property of the matrix $M$),
it follows that
\begin{equation}
\frac{3a_1^2-b_1^2}{a_1}=\frac{3a_i^2-b_i^2}{a_i}
\label{eq-fraction}
\end{equation}
We now subtract the second row multiplied by the value given in \eqref{eq-fraction} from the third row and obtain the following matrix,
which has the rank two.
\[\begin{bmatrix}
  1 & 0 & 1 & 0 \\
  -(a_1^2+b_1^2) & a_1^2 & -(a_i^2+b_i^2) & a_i^2 \\
  -(a_1^2+b_1^2)^2/a_1 & 0 & -(a_i^2+b_i^2)^2/a_i & 0
  \end{bmatrix}\]
It follows that
\[\frac{(a_1^2+b_1^2)^2}{a_1}=\frac{(a_i^2+b_i^2)^2}{a_i}\,,\]
which yields that
\begin{equation}
b_i^2=-a_i^2+\sqrt{\frac{a_i}{a_1}}(a_1^2+b_1^2)\,.
\label{eq-bi-1}
\end{equation}
On the other hand, we derive from \eqref{eq-fraction} that
\begin{equation}
b_i^2=3a_i^2-\frac{a_i}{a_1}\left(3a_1^2-b_1^2\right)\,.
\label{eq-bi-2}
\end{equation}
We obtain by comparing \eqref{eq-bi-1} and \eqref{eq-bi-2} the following.
\begin{align*}
0 & =  4a_i^2-\frac{a_i}{a_1}\left(3a_1^2-b_1^2\right)-\sqrt{\frac{a_i}{a_1}}(a_1^2+b_1^2) \\
  & =  \sqrt{a_i}(\sqrt{a_i}-\sqrt{a_1})\left(4a_i+4\sqrt{a_1a_i}+\frac{a_1^2+b_1^2}{a_1}\right)
\end{align*}
Since both $a_1$ and $a_i$ are positive, this expression can be equal to zero only if $a_i=a_1$.
Consequently, the equality \eqref{eq-bi-1} implies that $b_i=b_1$ or $b_i=-b_1$,
which contradicts the choice of $(a_i,b_i)$.
Hence, we have established that if at least one of $b_1,\ldots,b_{\ell}$ is non-zero,
then the second case indeed applies.

We now consider the case that $b_1=\cdots=b_{\ell}=0$.
Suppose that the first case in the statement of the lemma does not apply.
This implies that there exist three distinct positive reals $\alpha$, $\beta$ and $\gamma$ such that
at least one of the values $r_1,\ldots,r_{k},a_1,\ldots,a_{\ell}$ is $\alpha$,
at least one is $\beta$ and at least one is $\gamma$.
Consequently, the matrix $M$ contains the following submatrix possibly after dividing some columns by two (the columns correspond
to those of the variables $r_1,\ldots,r_{k},a_1,\ldots,a_{\ell}$ that are equal to $\alpha$, $\beta$ and $\gamma$, respectively;
the columns corresponding to the variables $a_1,\ldots,a_{\ell}$ are divided by two).
\[\begin{bmatrix}
  1 & 1 & 1 \\
  3\alpha^2 & 3\beta^2 & 3\gamma^2 \\
  4\alpha^3 & 4\beta^3 & 4\gamma^3
  \end{bmatrix}\]
The determinant of this matrix is equal to
\[12(\alpha^2\beta^3+\beta^2\gamma^3+\gamma^2\alpha^3-\alpha^2\gamma^3-\beta^2\alpha^3-\gamma^2\beta^3)\,,\]
which is equal to
\[12(\alpha-\beta)(\beta-\gamma)(\gamma-\alpha)(\alpha\beta+\alpha\gamma+\beta\gamma)\,.\]
Since this expression is non-zero for all distinct positive reals $\alpha$, $\beta$ and $\gamma$,
we conclude that the rank of $M$ is three, which contradicts our assumption that
the first case as described in the statement of the lemma does not apply.
\end{proof}

Before we can prove the main result of this section, we need two additional auxiliary lemmas.

\begin{lemma}
\label{lm-cubes}
Let $z\in (0,1]$, and
let $x_1,\ldots,x_n$ be non-negative reals such that $x_1+\cdots+x_n=1/2$ and $x_i\le z$ for every $i\in [n]$.
Then it holds that
\[\sum_{i=1}^n x_i^3\le\lfloor z^{-1}/2\rfloor\cdot z^3+(1/2-\lfloor z^{-1}/2\rfloor\cdot z)^3,\]
and the equality holds if and only if all but at most one of $x_1,\ldots,x_n$ are equal to $0$ or $z$.
\end{lemma}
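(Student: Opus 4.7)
The plan is to exploit the convexity of $x\mapsto x^3$ on $[0,z]$ through a standard ``push to the boundary'' exchange argument, reducing any optimum to a configuration in which at most one $x_i$ lies in the open interval $(0,z)$.

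First, I would fix an optimal $(x_1,\ldots,x_n)$ and show that no two coordinates can lie strictly inside $(0,z)$. Suppose towards a contradiction that $0<x_i\le x_j<z$. For any $\epsilon\in\bigl(0,\min(x_i,z-x_j)\bigr]$, the replacement $(x_i,x_j)\mapsto(x_i-\epsilon,x_j+\epsilon)$ preserves both non-negativity and the sum constraint, while the cubic sum changes by
\[(x_i-\epsilon)^3+(x_j+\epsilon)^3-x_i^3-x_j^3=3\epsilon(x_j^2-x_i^2)+3\epsilon^2(x_i+x_j),\]
which is strictly positive since $x_i+x_j>0$. This contradicts optimality, proving that at most one $x_i$ lies in $(0,z)$.

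Second, I would evaluate the essentially unique optimum. If $m$ coordinates equal $z$, at most one equals some $y\in[0,z]$, and the rest are zero, then $mz+y=1/2$ together with $y\in[0,z]$ pins down $m=\lfloor z^{-1}/2\rfloor$ and $y=1/2-mz$. The objective then evaluates to exactly $\lfloor z^{-1}/2\rfloor\cdot z^3+(1/2-\lfloor z^{-1}/2\rfloor\cdot z)^3$, establishing the claimed upper bound. In the boundary case $z^{-1}/2\in\NN$ the exceptional value equals $0$ or $z$, but the resulting sum is unchanged.

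For the equality characterisation, the forward direction is immediate from the exchange argument: any configuration with two coordinates in $(0,z)$ admits a strictly improving perturbation and so cannot attain the bound. The reverse direction follows from the uniqueness analysis above, since the constraints leave essentially no freedom once all but at most one $x_i$ are required to lie in $\{0,z\}$. I do not foresee any real obstacle; the computation is elementary, and the only mildly delicate point is the boundary case $z^{-1}/2\in\NN$, where the ``at most one'' exception in the statement collapses to no exception at all.
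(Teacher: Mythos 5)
Your proposal is correct and follows essentially the same route as the paper: take a maximizer and apply the convexity-based exchange $(x_i,x_j)\mapsto(x_i-\epsilon,x_j+\epsilon)$ to rule out two coordinates strictly inside $(0,z)$, then read off the extremal configuration from the sum constraint. Your explicit evaluation of the optimum and the remark on the boundary case $z^{-1}/2\in\NN$ only spell out details the paper leaves implicit.
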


\begin{proof}
Consider any $n$-tuple $x_1,\ldots,x_n$ that maximizes the sum $x_1^3+\cdots+x_n^3$
among all $n$-tuples of non-negative reals $x_1,\ldots,x_n$ such that $x_1+\cdots+x_n=1/2$ and $x_i\le z$, $i\in [n]$.
If $x_i\in\{0,z\}$ for all but at most one $i\in [n]$, then the sum of $x_1^3+\cdots+x_n^3$
is equal to $\sum_{i=1}^n x_i^3\le\lfloor z^{-1}/2\rfloor\cdot z^3+(1/2-\lfloor z^{-1}/2\rfloor\cdot z)^3$ and the lemma holds.
Otherwise, there exist $x_i$ and $x_j$ such that $0<x_i\le x_j<z$.
Choose $\varepsilon>0$ such that $\varepsilon<x_i$ and $\varepsilon<z-x_j$, and
replace $x_i$ with $x_i-\varepsilon$ and $x_j$ with $x_j+\varepsilon$.
This preserves the sum $x_1+\cdots+x_n$ and increases the sum $x_1^3+\cdots+x_n^3$,
which contradicts the choice of the $n$-tuple $x_1,\ldots,x_n$.
\end{proof}

Linial and Morgenstern~\cite{LinM16} proved that, among the random blow-ups of transitive tournaments with the fixed density of $C_3$,
the density of $C_4$ is minimized if all parts have the same size except possibly for a single smaller part.
This statement is equivalent to the following.

\begin{lemma}[{Linial and Morgenstern~\cite[Lemma 2.7]{LinM16}}]
\label{lm-linial}
Let $x_1,\ldots,x_n$ be any non-negative reals such that their sum is $1/2$.
It holds that
\[x_1^4+\cdots+x_n^4\ge g(x_1^3+\cdots+x_n^3)\,.\]
\end{lemma}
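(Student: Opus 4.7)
The plan is to treat Lemma~\ref{lm-linial} as the constrained optimization of minimizing $F(x_1,\dots,x_n) = \sum_{i=1}^n x_i^4$ subject to $\sum_i x_i = 1/2$, $\sum_i x_i^3 = s_3$ and $x_i \ge 0$, for each fixed $s_3 \in [0,1/8]$, and to show that the minimum equals $g(s_3)$. Since each $x_i \in [0,1/2]$ and appending zeros preserves all moments, a minimizer exists by compactness. Applying Lagrange multipliers, at any minimizer every index with $x_i > 0$ satisfies $4x_i^3 = \lambda + 3\mu x_i^2$ for common constants $\lambda,\mu$, so the positive values of $x_i$ are real roots of a fixed cubic and thus take on at most three distinct values.

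To rule out three distinct positive values, suppose $0 < y_1 < y_2 < y_3$ all occur in a minimizer and perturb these three atoms by $(y_1+\varepsilon,\, y_2+\delta,\, y_3-\varepsilon-\delta)$. The sum is preserved automatically, preservation of $\sum x_i^3$ to first order is a single linear equation in $(\varepsilon,\delta)$, and the vanishing of the first-order change in $F$ on the resulting one-parameter family gives the identity $(y_1^3-y_3^3)(y_2^2-y_3^2) = (y_2^3-y_3^3)(y_1^2-y_3^2)$. Factoring $y^k - y_3^k = (y - y_3)(y^{k-1} + y^{k-2} y_3 + \cdots + y_3^{k-1})$ and dividing out the common nonzero factor $(y_1-y_3)(y_2-y_3)$ reduces this to $(y_1-y_2)(y_1 y_2 + y_1 y_3 + y_2 y_3) = 0$. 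Since all $y_i$ are positive, the second factor is strictly positive and thus $y_1 = y_2$, contradicting distinctness; hence every minimizer has at most two distinct positive values.

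The main obstacle is to pin the structure down further: among two-value configurations with $p$ atoms at a smaller value $a$, $q$ atoms at a larger value $b$, and any number of zeros, I must show the minimum of $\sum x_i^4$ for fixed $\sum x_i^3$ occurs when $p = 1$, matching the extremal form in the definition of $g$. Since $p$ and $q$ are integers, I plan to use a discrete split-and-merge move: replace two $a$-atoms and one $b$-atom by a single atom at $2a$ together with two atoms $b_1,b_2$ chosen so that $b_1+b_2 = b$ and $b_1 b_2 = 2a^3/b$. Both $\sum x_i$ and $\sum x_i^3$ are preserved by construction, the move is feasible precisely when $b \ge 2a$ (so that $b_1,b_2$ are real), and the induced change in $\sum x_i^4$ equals
\[
 14 a^4 - 8 a^3 b + 8 a^6/b^2 \;=\; -\frac{2a^3 (b - 2a)(4b^2 + ab + 2a^2)}{b^2},
\]
which is non-positive for $b \ge 2a$ and strictly negative for $b > 2a$. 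Since the post-move configuration has three distinct positive values, applying the variational argument of the second paragraph again produces a strictly better two-value configuration, so any minimizer with $p \ge 2$ must satisfy $b \le 2a$. The residual range $b \le 2a$ with $p \ge 2$, where the split-and-merge move is unavailable, is the most delicate; I plan to close it by a direct parametric comparison, showing that the curves traced out in the $(\sum x_i^3, \sum x_i^4)$-plane by $(p,q)$-configurations with $p,q \ge 2$ lie pointwise above the lower envelope traced by the extremal $(1,k)$-family.

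Once the minimizer is pinned to the extremal form, a direct computation verifies that $\sum x_i^3 = \tfrac{1}{8}(\lfloor z^{-1}\rfloor z^3 + (1 - \lfloor z^{-1}\rfloor z)^3)$ and $\sum x_i^4 = \tfrac{1}{16}(\lfloor z^{-1}\rfloor z^4 + (1 - \lfloor z^{-1}\rfloor z)^4)$, matching the definition of $g$. As $z$ ranges over $(0,1]$, $\sum x_i^3$ covers $(0,1/8]$ continuously, and the boundary case $s_3 = 0$ forces all $x_i = 0$. I expect the residual case treated in the third paragraph to be where most of the work of the proof concentrates.
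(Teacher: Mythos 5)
You should first note that the paper does not actually prove Lemma~\ref{lm-linial}: it is quoted from Linial and Morgenstern~\cite{LinM16} (their Lemma~2.7), so your argument has to stand on its own. The reduction steps you do carry out are correct: the stationarity condition $4x_i^3=\lambda+3\mu x_i^2$ for the positive coordinates (so at most three distinct positive values), the exclusion of three distinct positive values via the identity $(y_1^3-y_3^3)(y_2^2-y_3^2)-(y_2^3-y_3^3)(y_1^2-y_3^2)=(y_1-y_3)(y_2-y_3)(y_1-y_2)(y_1y_2+y_1y_3+y_2y_3)$, and the split-and-merge computation, whose change in the fourth moment is indeed $14a^4-8a^3b+8a^6/b^2=-2a^3(b-2a)(4b^2+ab+2a^2)/b^2\le 0$ for $b\ge 2a$. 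These parallel the determinant analysis the paper performs in Lemma~\ref{lm-lagrange}, and I find no error in them.

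The problem is that the proof is not finished, and the unfinished part is where the actual content of the lemma lies. For two-valued configurations with $p\ge 2$ atoms at the smaller value $a$ and $a<b\le 2a$ you offer only a plan (``direct parametric comparison \dots lie pointwise above the lower envelope''), with no argument; but that comparison \emph{is} the inequality to be proved, since such configurations survive all of your reductions. The case is not vacuous, and the margin is small: for example $(0.1,0.1,0.15,0.15)$ has $\sum x_i=1/2$, $\sum x_i^3=0.00875$ and $\sum x_i^4=0.0012125$, whereas $g(0.00875)\approx 0.0011946$ (attained by three atoms $\approx 0.140$ and one atom $\approx 0.080$), so a genuine quantitative argument, not an assertion, is needed to show such points lie above the $(1,k)$ envelope. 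Moreover, your residual family ``$(p,q)$-configurations with $p,q\ge 2$'' omits the configurations with $q=1$, e.g.\ $(0.15,0.15,0.2)$, which also have $p\ge2$ and $a<b\le 2a$ and must be handled by the same missing comparison. Until that step is supplied, you have established structural restrictions on minimizers but not the inequality $x_1^4+\cdots+x_n^4\ge g(x_1^3+\cdots+x_n^3)$. (A minor slip: the remark that ``$s_3=0$ forces all $x_i=0$'' contradicts $\sum x_i=1/2$; that value of $s_3$ is simply unattainable for finite $n$, and the claim there is trivial anyway since $g(0)=0$.)
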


We are now ready to prove the main result of this section.

\begin{theorem}
\label{thm-reg23}
Let $A$ be a tournament matrix of order $n$.
If $\sigma_3(A)\ge 1/72$, then $\sigma_4(A)\ge g(\sigma_3(A))$.
\end{theorem}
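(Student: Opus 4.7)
The plan is to pass to the optimization problem \spectrum{} via Lemma~\ref{lm-problem}, invoke the Lagrange characterization of Lemma~\ref{lm-lagrange} on a minimizer, and then handle the two resulting cases separately.

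\emph{Setup.} By Lemma~\ref{lm-problem}, the normalized eigenvalues of $A$ form a feasible solution of \spectrum{} with $s_3 = \sigma_3(A)$, objective value $\sigma_4(A)$, and parameter $\rho$ equal to the spectral radius of $A$ divided by $n$. Since the feasible set of \spectrum{} is compact, a minimizer exists, so it suffices to prove that every minimizer attains objective at least $g(s_3)$ whenever $s_3 \ge 1/72$. I would apply Lemma~\ref{lm-lagrange} to such a minimizer to obtain two cases.

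\emph{Case 1 (all $b_i=0$).} I would form the multiset $\{\rho\}\cup\{r_1,\ldots,r_k\}\cup\{a_1,a_1,\ldots,a_\ell,a_\ell\}$ of non-negative reals. These sum to $1/2$ by the first equality constraint of \spectrum{}; their sum of cubes equals $s_3$ (since each $-3a_ib_i^2$ term vanishes) and their sum of fourth powers equals the objective. Lemma~\ref{lm-linial} then yields $\sigma_4 \ge g(s_3)$ immediately.

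\emph{Case 2 (some $b_i\neq 0$).} Here the spectrum consists of $p\ge 1$ copies of $\rho$ together with $m\ge 1$ conjugate pairs $a'\pm\unit b'$, and the constraints read $p\rho+2m a'=1/2$ and $p\rho^3+2m(a'^3-3a'b'^2)=s_3$; by Perron--Frobenius one additionally has $a'^2+b'^2\le\rho^2$. Multiplying the first constraint by $\rho^2$ and subtracting from the second yields the identity
\[s_3 \;=\; \rho^2/2 \;-\; 2m\,a'\bigl(\rho^2-a'^2+3b'^2\bigr),\]
and since $\rho^2-a'^2\ge b'^2\ge 0$ the parenthesised quantity is non-negative; hence $s_3\le\rho^2/2$, so $\rho\ge\sqrt{2s_3}\ge 1/6$, and combined with $p\rho\le 1/2$ this restricts $p\in\{1,2,3\}$. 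I would next eliminate $a'$ (from the first constraint) and $b'^2$ (from the second) to express $\sigma_4$ as a rational function of $(\rho,p,m,s_3)$; the conditions $b'^2\ge 0$ and $a'^2+b'^2\le\rho^2$ then confine $m$ to a bounded set of positive integers. The boundary case $a'=0$ is handled separately and gives $\sigma_4\ge g(s_3)$ directly, meeting $g$ at $s_3=1/72,\,1/32,\,1/8$ for $p=3,2,1$ respectively. For each remaining choice of $(p,m)$, verifying $\sigma_4\ge g(s_3)$ reduces to a polynomial inequality in $\rho$ and $s_3$, to be checked on each piece of $g$'s domain, namely $[1/72,1/32]$ and $[1/32,1/8]$.

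\emph{Main obstacle.} The hard part will be this final polynomial verification in Case 2: although the setup reduces matters to finitely many sub-cases, each is a one-parameter inequality whose form depends on which piece of $g$ is active, and Lemma~\ref{lm-cubes} will likely be needed to supply an a priori upper bound on $p\rho^3+2m a'^3$ in some of the more delicate sub-cases.
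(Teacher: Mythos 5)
Your setup and Case 1 match the paper's proof (Lemma~\ref{lm-problem}, Lemma~\ref{lm-lagrange}, then Lemma~\ref{lm-linial} applied to the multiset of real values), but Case 2 has a genuine gap at the step ``by Perron--Frobenius one additionally has $a'^2+b'^2\le\rho^2$''. The reduction you are running is $\sigma_4(A)\ge\min\spectrum{}$, so the object you must bound is a \emph{minimizer of the optimization problem}, which is what Lemma~\ref{lm-lagrange} describes. A minimizer is merely a feasible point of \spectrum{}; it need not be the spectrum of any tournament matrix, so Perron--Frobenius says nothing about it, and the constraints of \spectrum{} bound only the real variables $r_i$ by $\rho$ --- the pairs $(a_i,b_i)$ have no upper bound at all. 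You could legitimately add $a_i^2+b_i^2\le\rho^2$ as extra constraints (the true eigenvalues of $A$ satisfy them), but then Lemma~\ref{lm-lagrange} no longer applies as stated: its proof extracts a rank-at-most-two condition from the gradients of the two equality constraints alone, which is valid only at coordinates where no inequality constraint is active, and the added constraint can be active with $a_i,b_i\neq 0$, so the classification of minimizers would have to be redone. Since your inequality $s_3\le\rho^2/2$ (hence $\rho\ge 1/6$ and $p\le 3$) and your control of $m$ both rest on this modulus bound, Case 2 is unsupported from its first line. The paper gets the lower bound on $\rho$ differently and soundly: $\rho$ is a \emph{parameter} of \spectrum{}, and the bound $\rho\ge z\ge 1/6$ (with $z$ defined by the equal-parts formula, which is at least as strong as $\sqrt{2s_3}$) is proved for the actual matrix $A$ before passing to the optimization problem, via Lemma~\ref{lm-positive} and Lemma~\ref{lm-cubes}.

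Even granting the setup, your outline stops where the real difficulty begins: the ``final polynomial verification'' for each $(p,m)$ is precisely the content of the remaining claim and is not carried out. Moreover, the assertion that $b'^2\ge 0$ and $a'^2+b'^2\le\rho^2$ confine $m$ to a bounded set is not correct as stated: eliminating $a'=(1/2-p\rho)/(2m)$ gives $6ma'=3(1/2-p\rho)$ fixed, so $b'^2$ tends to a finite limit as $m\to\infty$ and neither condition excludes large $m$ (the objective does grow, but that is something you would have to prove, and it reintroduces an unbounded family of cases). The paper avoids any case analysis over $(p,m)$ by a second Lagrange-multiplier argument on the reduced problem --- the matrix in \eqref{eq-lagrange2}, whose rank analysis was already done inside Lemma~\ref{lm-lagrange} --- showing the minimum is attained with $\rho=z$, $a=0$ or $b=0$; the boundary case $\rho=z$ is then dispatched with Lemma~\ref{lm-cubes} (this is where the specific bound $\rho\ge z$, not just $\rho\ge 1/6$, is needed) and the case $b=0$ with Lemma~\ref{lm-linial}. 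To rescue your plan you would need either such a boundary reduction or a uniform inequality in $(\rho,p,m,s_3)$; as written, the proposal neither justifies the constraints it relies on nor performs the verification it defers.
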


\begin{proof}
Let $s_3=\sigma_3(A)$.
We start with lower bounding the spectral radius of $A$.
Let $\lambda_1,\ldots,\lambda_n$ be the eigenvalues of $A$.
By Lemma~\ref{lm-positive}, the real parts of all the eigenvalues are non-negative,
which implies that
\[s_3=\sum_{i=1}^n\left(\frac{\lambda_i}{n}\right)^3\le\sum_{i=1}^n\left(\frac{\Real\lambda_i}{n}\right)^3\,.\]
By Lemma~\ref{lm-cubes}, the last sum is at most
\[\lfloor\rho_A^{-1}/2\rfloor\cdot\rho_A^3+(1/2-\lfloor\rho_A^{-1}/2\rfloor\cdot\rho_A)^3\]
where $\rho_A$ is the spectral radius of $A$ divided by $n$.
Consequently, $\rho_A$ is at least $z$
where $z$ is the unique real between $0$ and $1/2$ satisfying that
\[s_3=\lfloor z^{-1}/2\rfloor\cdot z^3+(1/2-\lfloor z^{-1}/2\rfloor\cdot z)^3\,.\]
Note that $z\ge 1/6$ since $s_3\geq 1/72$.

Lemma~\ref{lm-problem} now yields that the theorem will be proven
if we show that the optimal solution of
the problem \spectrum{} is at least $g(s_3)$ for $s_3$, any $\rho\ge z$ and all non-negative integers $k$ and $\ell$.
By Lemma~\ref{lm-lagrange}, this would be implied by the following two claims,
which correspond to the two cases described in the statement of Lemma~\ref{lm-lagrange}.
\begin{description}
\item[Claim 1.] If $r_1,\ldots,r_k$ are any positive real numbers that have at most three distinct values and
               that satisfy $r_1+\cdots+r_k=1/2$ and $r_1^3+\cdots+r_k^3=s_3$, then $r_1^4+\cdots+r_k^4\ge g(s_3)$.
\item[Claim 2.] If $m$ and $m'$ are positive integers, $\rho\geq z$, $a$ is a non-negative real and $b$ is a real such that
               $m\rho+2m'a=1/2$ and $m\rho^3+2m'(a^3-3ab^2)=s_3$,
	       then $m\rho^4+2m'(a^4-6a^2b^2+b^4)\ge g(s_3)$.
\end{description}
Claim 1 follows from Lemma~\ref{lm-linial} (even without the restriction on the number of the distinct values that
$r_1,\ldots,r_k$ may have). So, we focus on proving Claim 2 in the remainder of the proof.
Note that it holds that $m\in\{1,2\}$ in this case since $\rho\ge z\ge 1/6$.

To prove Claim 2, we fix $m$ and $m'$ and consider the following optimization problem:
minimize the sum $m\rho^4+2m'(a^4-6a^2b^2+b^4)$ subject to $m\rho+2m'a=1/2$, $m\rho^3+2m'(a^3-3ab^2)=s_3$, $\rho\ge z$ and $a\ge 0$.
The method of Langrange multipliers implies that the following matrix is not full rank
\begin{equation}
  \begin{bmatrix}
  m & 2m' & 0 \\
  3m\rho^2 & 6m'a^2-6m'b^2 & -12m'ab \\
  4m\rho^3 & 8m'a^3-24m'ab^2 & 8m'b^3-24m'a^2b
  \end{bmatrix}
  \label{eq-lagrange2}
\end{equation}
for the values of $\rho$, $a$ and $b$ that minimize the sum
unless $\rho=z$ or $a=0$.
However, dividing the first column by $m$ and the remaining two by $m'$, permuting the columns and renaming the variables
yields the same matrix as in \eqref{eq-lagrange}, which we have analyzed in the proof of Lemma~\ref{lm-lagrange}.
In particular, the matrix in \eqref{eq-lagrange2} has full rank unless $a=0$ or (possibly) $b=0$.
We conclude that the expression $m\rho^4+2(a^4-6a^2b^2+b^4)$ is minimized
when at least one of the following holds: $\rho=z$, $a=0$ or $b=0$.
We next analyze these three cases.
\begin{description}
\item[The case $\rho=z$.]
In this case, Lemma~\ref{lm-cubes} implies that $m\rho^3+2m'a^3<s_3$ unless $a\geq \rho$,
i.e., there is no such feasible solution unless $a\geq \rho$.
If indeed $a \geq \rho$, then since $m\rho+2m'\rho=1/2$ and $\rho\geq1/6$, it must be that $m=m'=1$, $\rho=a=1/6$, $s_3=1/72$ and $b=0$, in which case $m\rho^4+2m'(a^4-6a^2b^2+b^4)=1/432=g(s_3)$.
\item[The case $a=0$.]
If $a=0$, then, as noted in the proof of Lemma \ref{lm-lagrange}, setting $b=0$ does not affect the constraints, but does decrease the objective function. Hence this case reduces to the final case $b=0$.

\item[The case $b=0$.]
If $b=0$, then using Lemma~\ref{lm-linial} we get that $m\rho^4+2m'a^4\ge g(m\rho^3+2m'a^3)=g(s_3)$.
\end{description}
Hence, we have shown that $m\rho^4+2m'(a^4-6a^2b^2+b^4)\ge g(s_3)$ for all $\rho$, $a$ and $b$ such that
$m\rho+2m'a=1/2$, $m\rho^3+2m'(a^3-3ab^2)=s_3$, $\rho\ge z$ and $a\ge 0$.
The proof of Claim 2 is now finished and so is the proof of the theorem.
\end{proof}

Proposition~\ref{prop-limit} yields the following corollary in the tournament limit setting.

\begin{corollary}
\label{cor-reg23}
Let $W$ be a tournament limit.
If $t(C_3,W)\ge 1/72$, then it holds that $t(C_4,W)\ge g(t(C_3,W))$.
\end{corollary}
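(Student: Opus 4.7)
The plan is to mirror the derivation of the first part of Corollary~\ref{cor-reg2} from Theorem~\ref{thm-reg2}, transferring Theorem~\ref{thm-reg23} from the matrix setting to tournament limits by approximating $W$ by tournament matrices in a way that preserves the lower bound on the $3$-cycle density. Although the statement is advertised as following from Proposition~\ref{prop-limit}, the most natural route uses the weak regular approximation machinery set up at the end of Section~\ref{sec-prelim}, which is itself part of the same limit framework.

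Concretely, given $W$ with $t(C_3,W)\ge 1/72$, let $(W_k)_{k\in\NN}$ be a sequence of refining weak $1/k$-regular approximations of $W$ with associated tournament matrices $A_k$, as guaranteed by the construction described in Section~\ref{sec-prelim}. Because $W_k$ is a step function taking value $A_{k,i,j}$ on $Z_{k,i}\times Z_{k,j}$, we have $\sigma_\ell(A_k)=t(C_\ell,W_k)$ exactly, and the $L_1$-convergence of the $W_k$ to $W$ gives $t(C_\ell,W_k)\to t(C_\ell,W)$ for every $\ell\ge 3$. The key observation is that Proposition~\ref{prop-WT3} yields
\[t(C_3,W_k)\ge t(C_3,W)\ge \frac{1}{72}\]
for every $k\in\NN$, so $\sigma_3(A_k)\ge 1/72$ unconditionally, placing each $A_k$ in the regime covered by Theorem~\ref{thm-reg23}.

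Theorem~\ref{thm-reg23} applied to each $A_k$ then gives $\sigma_4(A_k)\ge g(\sigma_3(A_k))$, which rewrites as $t(C_4,W_k)\ge g(t(C_3,W_k))$. The function $g$ is continuous on $[0,1/8]$---this is immediate from its piecewise polynomial definition once one checks that at each breakpoint, where $z^{-1}$ is an integer, the formulas on the two sides agree---so passing to the limit $k\to\infty$ yields $t(C_4,W)\ge g(t(C_3,W))$.

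I do not anticipate a genuine obstacle: once Theorem~\ref{thm-reg23} is in hand, the derivation is essentially mechanical, with the only potentially delicate point being that the hypothesis $\sigma_3(A_k)\ge 1/72$ must hold for \emph{every} approximation rather than merely in the limit. This is precisely what Proposition~\ref{prop-WT3} is designed to supply, and it is the same mechanism that makes the analogous step in the proof of Corollary~\ref{cor-reg2} go through. An alternative route via Proposition~\ref{prop-limit} (approximating $W$ by a sequence of tournaments and using Proposition~\ref{prop-trace}) would work similarly, but would require handling the small $O(1/n)$ discrepancy between $t(C_3,T_i)$ and $\sigma_3(A_{T_i})$ at the boundary $t(C_3,W)=1/72$, which the weak regular approach avoids automatically.
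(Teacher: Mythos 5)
Your argument is correct, but it is not the route the paper takes: the paper disposes of Corollary~\ref{cor-reg23} with a bare appeal to Proposition~\ref{prop-limit}, i.e.\ it transfers Theorem~\ref{thm-reg23} through a sequence of tournaments $(T_i)$ whose cycle densities converge to those of $W$, together with Proposition~\ref{prop-trace} and continuity of $g$. You instead replay the mechanism from the first half of the proof of Corollary~\ref{cor-reg2}: refining weak $1/k$-regular approximations $W_k$ with matrices $A_k$, the identity $\sigma_\ell(A_k)=t(C_\ell,W_k)$, and crucially Proposition~\ref{prop-WT3} to guarantee $\sigma_3(A_k)\ge t(C_3,W)\ge 1/72$ for \emph{every} $k$, so that Theorem~\ref{thm-reg23} applies to each approximation before passing to the limit. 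The trade-off is exactly the one you identify: the paper's one-line derivation via Proposition~\ref{prop-limit} is shorter but, read literally, only gives $\sigma_3(A_{T_i})\ge 1/72 - o(1)$, so at the boundary value $t(C_3,W)=1/72$ it needs a small additional continuity or perturbation argument (or an extension of Theorem~\ref{thm-reg23} slightly below $1/72$), whereas your monotonicity-of-$C_3$-density argument makes the hypothesis hold unconditionally and closes that gap automatically. Both routes are legitimate; yours is the more careful and self-contained of the two, at the cost of invoking the regularity/martingale machinery rather than just the compactness statement.
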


\section{Concluding remarks}
\label{sec-concl}
Unfortunately, the proof of Theorem~\ref{thm-reg23} does not immediately work in higher regimes because, in the case of four or more parts, the solution to the optimization problem \spectrum{} beats the conjectured minimum. Of course, as solutions need not be realizable as the eigenvalues of a tournament matrix, this does not invalidate Conjecture~\ref{conj-main}. It is possible that the current method can be pushed further by introducing to the optimization problem additional constraints that reflect the properties that eigenvalues of tournament matrices must satisfy.

Meanwhile, in the regime of two parts, we have been able to fully determine the asymptotic structure of extremal examples.
These constructions can be extended to the remaining regimes as follows.
Fix $k\in\NN$, $z\in [1/(k+1),1/k]$ and $i,i'\in [k+1]$ such that $|i-i'|=1$.
We construct a tournament $T$ with $n$ vertices as follows.
The vertices of $T$ are split into $k+1$ parts $V_1,\ldots,V_{k+1}$ such that
$|V_i|=n-k\lfloor zn\rfloor$ and $|V_j|=\lfloor zn\rfloor$ if $j\neq i$.
If two vertices $v$ and $v'$ respectively belong to distinct parts $V_j$ and $V_{j'}$ with $j<j'$ and $\{j,j'\}\neq \{i,i'\}$,
then the tournament $T$ contains an arc from $v$ to $v'$.
If, instead, $v$ and $v'$ belong to the \textit{same} part $V_j$, where $j\not\in\{i,i'\}$,
then the edge between $v$ and $v'$ is oriented from $v$ to $v'$ with probability $1/2$,
i.e., the vertices of every such part induce a randomly oriented tournament.
Finally, each vertex $v\in V_i\cup V_{i'}$ is assigned a real number $p_v\in [0,1/2]$ and
the edge between $v$ and $v'\in V_i\cup V_{i'}$ is directed from $v$ to $v'$ with probability $1/2+p_v-p_{v'}$.
If the expected number of triangles in $T$
is equal to $\frac{1}{8}\left(k\lfloor zn\rfloor^3+\left(n-k\lfloor zn\rfloor\right)^3\right)$,
then the expected value of the density $t(C_4,T)$ is
\[g\left(\frac{1}{8}\left(k\lfloor zn\rfloor^3+\left(n-k\lfloor zn\rfloor\right)^3\right)\right)+o(1)\]
and both of these random variables are concentrated.
In particular, unless $z^{-1}$ is a positive integer,
there are infinitely many different types of extremal tournaments. As discussed in the introduction, we believe this is the reason why the problem is resistant to the standard flag algebra techniques.

It is also interesting to note that the problem of determining the set of feasible
densities of cycles of length three and four is equivalent to the analogous problems
for transitive tournaments of order three and four and for the cycle and transitive
tournament of order four~\cite[Proposition 1.1]{LinM16}. 
To see this, let $T_k$ be the transitive tournament of order $k$ and
let $t(T_k,T)$ be the probability that a random mapping from $V(T_k)$ to $V(T)$ is a homomorphism.
The following holds for every $n$-vertex tournament $T$:
\[8t(C_3,T)+24t(T_4,T)-6t(C_4,T)=1-O(n^{-1}).\]
Thus, the problem of minimizing the density of $C_4$ when the density of $C_3$ is fixed is
equivalent to minimizing the density of $T_4$ when the density of $T_3$ is fixed, and
also equivalent to minimizing the density of $C_4$ when $T_4$ is fixed, in the sense that
a complete solution to any of these three problems yield complete solutions to the remaining two.

\bibliographystyle{bibstyle}
\bibliography{tourn-c4}

\end{document}